\documentclass{article}
\usepackage[utf8]{inputenc}
\usepackage{amsfonts,amsmath,amssymb,amsthm,booktabs,url}
\usepackage{xcolor}
\usepackage{graphicx}
\usepackage{bbm}
\usepackage{url}

\renewcommand{\le}{\leqslant}
\renewcommand{\ge}{\geqslant}

\renewcommand{\emptyset}{\varnothing}

\newcommand{\natu}{\mathbb{N}}
\newcommand{\ints}{\mathbb{Z}}
\newcommand{\real}{\mathbb{R}}
\newcommand{\N}{\mathbb{N}}

\newcommand{\zb}{\mathbb{Z}_b}

\newcommand{\bsa}{\boldsymbol{a}}
\newcommand{\bsb}{\boldsymbol{b}}
\newcommand{\bsf}{\boldsymbol{f}}
\newcommand{\bsk}{\boldsymbol{k}}
\newcommand{\bsm}{\boldsymbol{m}}
\newcommand{\bsr}{\boldsymbol{r}}
\newcommand{\bsx}{\boldsymbol{x}}
\newcommand{\bsz}{\boldsymbol{z}}
\newcommand{\bsy}{\boldsymbol{y}}
\newcommand{\bsg}{\boldsymbol{g}}

\newcommand{\bsT}{\boldsymbol{T}}
\newcommand{\bsX}{\boldsymbol{X}}
\newcommand{\hk}{\mathrm{HK}}

\newcommand{\bsone}{\boldsymbol{1}}
\newcommand{\bszero}{\boldsymbol{0}}
\newcommand{\rd}{\,\mathrm{d}}
\newcommand{\dnorm}{\mathcal{N}}
\newcommand{\dunif}{\mathbb{U}}

\newcommand{\tran}{\mathsf{T}}
\newcommand{\ind}{\mathbbm{1}}

\newcommand{\vol}{\mathrm{VOL}}
\newcommand{\cov}{\mathrm{Cov}}
\newcommand{\e}{\mathbb{E}}
\newcommand{\wh}{\widehat}

\DeclareMathOperator{\rank}{rank}

\newtheorem{lemma}{Lemma}
\newtheorem{theorem}{Theorem}
\newtheorem{corollary}{Corollary}
\newtheorem{proposition}{Proposition}
\newtheorem{construction}{Construction}
\newtheorem{observation}{Observation}

\theoremstyle{definition}
\newtheorem{definition}{Definition}

\newcommand{\peter}[1]{\begingroup\color{blue}#1\endgroup}

\newcommand{\art}[1]{\begingroup\color{blue}#1\endgroup}

\title{Computable error bounds for 
quasi-Monte Carlo using points with non-negative local discrepancy}
\date{April 2024}
\author{
Michael Gnewuch\\
University of Osnabr\"uck
\and
Peter Kritzer\\
Austrian Academy of Sciences
\and
Art B. Owen\\
Stanford University
\and
Zexin Pan\\
Stanford University
}
\begin{document}
\maketitle

\begin{abstract}
Let $f:[0,1]^d\to\real$ be a completely monotone integrand
as defined by Aistleitner and Dick (2015)
and let points $\bsx_0,\dots,\bsx_{n-1}\in[0,1]^d$
have a non-negative local discrepancy (NNLD) everywhere
in $[0,1]^d$.
We show how to use these properties to get
a non-asymptotic and 
computable upper bound for the integral of $f$
over $[0,1]^d$.  An analogous non-positive local
discrepancy (NPLD) property provides a computable lower
bound. It has been known since Gabai (1967)
that the two dimensional Hammersley points
in any base $b\ge2$
have non-negative local discrepancy. Using
the probabilistic notion of associated random
variables, we generalize
Gabai's finding to digital nets in any base $b\ge2$
and any dimension $d\ge1$ when the generator matrices
are permutation matrices.  We show that permutation
matrices cannot attain the best values of the digital net
quality parameter when $d\ge3$.  As a consequence the
computable absolutely sure bounds we provide come
with less accurate estimates than the usual digital
net estimates do in high dimensions. We are also able to construct high dimensional rank one lattice rules that are NNLD. We show that those lattices do not have good discrepancy properties: any lattice rule with the NNLD property in dimension $d\ge2$ either fails to be projection regular or has all its points on the main diagonal. Complete monotonicity is a very strict requirement that for some integrands can be mitigated via a control variate.
\end{abstract}

\par\noindent
{\bf Keywords:}
Associated random variables, Digital nets,  Rank one lattices,
lattice rules, projection regular,  completely monotone functions

\section{Introduction}

Quasi-Monte Carlo (QMC) sampling \cite{dick:pill:2010,nied92} 
can have much better
asymptotic accuracy than plain Monte Carlo (MC), but
it does not come with the usual statistical error
estimates that MC has.  
Those estimates can be recovered
by randomized QMC (RQMC) \cite{lecu:lemi:2002,Owen:2023practical} based on independent replicates
of QMC.  In this paper we consider an alternative
approach to uncertainty quantification for QMC.
For some special sampling points with a non-negative local 
discrepancy (NNLD) property described later
and a suitably monotone
integrand $f$, we can compute upper and lower bounds
on the integral $\mu$ of $f$ over the unit cube in $d$ dimensions.
Methods based on random replication can provide
confidence intervals for $\mu$ that attain a desired level
such as 95\% or 99\% asymptotically, as
the number of replicates diverges.
The method we consider attains 100\% coverage for finite $n$.

Unlike the well-known
bounds derived via the Koksma-Hlawka inequality \cite{hick:2014},
these bounds can be computed by practical algorithms. Convex optimization \cite{boyd:vand:2004} has the notion of a certificate: a computable bound on the minimum value of the objective function.  The methods we present here provide  certificates for multidimensional integration of a completely monotone function.

This improved uncertainty quantification 
comes at some cost.  Our versions of the method
will be more accurate than MC for dimensions $d\le 3$,
as accurate as MC (apart from logarithmic factors)
for $d=4$ and less accurate than MC for $d\ge5$.
They also require some special knowledge of the integrand.

The problem is trivial and the solution is 
well known for $d=1$.
If $f:[0,1]\to\real$ is nondecreasing then
\begin{align}\label{eq:onedimcase}
\frac1n\sum_{i=0}^{n-1}f\Bigl(\frac{i}n\Bigr)
\le \int_0^1f(x)\rd x
\le 
\frac1n\sum_{i=1}^{n}f\Bigl(\frac{i}n\Bigr).
\end{align}
These bracketing inequalities hold even if some
of the quantities in them are $\pm \infty$.
This works because $f$ is nondecreasing, 
the evaluation points in the left hand side are 
`biased low' and those in the right hand side are `biased high'.

To get a multivariate version of~\eqref{eq:onedimcase},
we generalize the notion of points biased low to
points biased towards the origin in terms of a
non-negative local discrepancy (NNLD) property
of the points.  This property was shown to hold
for two dimensional Hammersley points 
by Gabai \cite{gaba:1967} in 1967.
We couple the NNLD property with
a multivariate notion of monotonicity called
complete monotonicity \cite{aist:dick:2015}.

This paper is organized as follows.
Section~\ref{sec:defs} gives some notation and then defines the properties of point sets and functions that we need. Theorem~\ref{thm:basic} there establishes the bracketing property we need. Section~\ref{sec:nnldproperties} gives fundamental properties of NNLD point sets with an emphasis on projection regular point sets. Only very trivial lattice rules, confined to the diagonal in $[0,1]^d$, can be both projection regular and NNLD. Cartesian products preserve the NNLD property as well as an analogous non-positive local discrepancy property. Section~\ref{sec:kh} compares our bounds to those obtainable from the Koksma-Hlawka inequality. Section~\ref{sec:netconstructions} shows that digital nets whose generator matrices are permutation matrices produce NNLD point sets. Section~\ref{sec:rankone} gives a construction of rank one lattice rules that are NNLD. We conclude with a discussion and some additional references in Section~\ref{sec:disc}.

\section{Definitions and a bound}\label{sec:defs}
Here we define a non-negative local discrepancy (NNLD)
property of the points we use as well as
a complete monotonicity criterion for the integrand.
We then establish bounds analogous to~\eqref{eq:onedimcase}.
First we introduce some notation.

\subsection{Notation}

For integer $b\ge1$, let $\ints_b=\{0,1,\dots,b-1\}$.
The set $\{1,2,\dots,d\}$ of variable indices
is denoted by $[d]$.
For $u\subseteq [d]$, we use $|u|$ for the 
cardinality of $u$
and $-u$ for the complement $[d]\setminus u$,
especially in subscripts and superscripts.
For points $\bsx,\bsz\in[0,1]^d$ and a set 
$u\subseteq [d]=\{1,2,\dots,d\}$
let $\bsx_u{:}\bsz_{-u}$ be the hybrid point with
$j$'th component $x_j$ for $j\in u$ and
$j$'th component $z_j$ for $j\not\in u$.
The singleton $\{j\}$ may be abbreviated to just
$j$ and $-\{j\}$ to $-j$. 
Then $\bsx_{-\{j\}}{:}\bsz_{\{j\}}$
becomes $\bsx_{-j}{:}\bsz_j$.
That is, a tuple
such as $\bsz_{\{j\}}$ is written as $\bsz_j$
(not $z_j$).

The points with all coordinates $0$ or all coordinates
$1$ are denoted by $\bszero$ and $\bsone$ respectively.
When it is necessary to specify their dimension we
use $\bszero_d$ and $\bsone_d$. The notation $\ind\{A\}$
is for an indicator variable equal to $1$ when $A$ is true
and $0$ otherwise.

  For integer $d\ge1$ we will use the following precedence
notion on $[0,1]^d$.
For $\bsx,\bsz\in\real^d$ we say
that $\bsx\le\bsz$
when $x_j\le z_j$ holds for all $j=1,\dots,d$.

\subsection{Non-negative local discrepancy}
A QMC rule is given by a list of points $\bsx_0,\dots,\bsx_{n-1}\in[0,1]^d$ and it yields the estimate
$$
\hat\mu = \frac1n\sum_{i=0}^{n-1}f(\bsx_i)
$$
of $\mu$.
We refer to these points as a point set, $P_n$, though
in any setting where some $\bsx_i$ are duplicated
we actually treat $P_n$ as a multiset, counting
multiplicity of the points.
The local discrepancy of $P_n$ at $\bsz\in[0,1]^d$ is given by
$$
\delta(\bsz) = \delta(\bsz;P_n)=\wh\vol([\bszero,\bsz))-\vol([\bszero,\bsz))
$$
where $\vol$ is the Lebesgue measure and $\wh\vol$
is the empirical measure with
$$
\wh\vol([\bszero,\bsz))=\frac1n\sum_{i=0}^{n-1}\art{\ind}_{\bsx_i\in[\bszero,\bsz)}.
$$
That is, $\vol$ is $\dunif[0,1]^d$ while $\wh\vol$
is $\dunif(P_n)$.
The quantity $D_n^*=\sup_{\bsz\in[0,1]^d}|\delta(\bsz)|$ is 
called the star discrepancy of the point set $P_n$.

\begin{definition}
The point set $P_n$ with points $\bsx_0,\dots,\bsx_{n-1}$
has non-negative local discrepancy (NNLD) if
\begin{align}\label{eq:defnnld}
\delta(\bsz)\ge0
\end{align}
for all $\bsz\in[0,1]^d$.
\end{definition}

A distribution for $\bsx\in\real^d$ is positively lower orthant dependent 
\cite{shak:1982} if 
$$\Pr( \bsx\le\bsz)\ge\prod_{j=1}^d\Pr(x_j\le z_j)$$
for all $\bsz\in\real^d$.
A sufficient condition for NNLD is that the $\dunif(P_n)$
distribution on $[0,1]^d$ is positively lower orthant dependent
and that the marginal distributions $\dunif\{x_{0,j},\dots,x_{n-1,j}\}$ for each $j=1,\dots,d$ are
stochastically smaller than $\dunif[0,1]$.
The random variable $X$ is stochastically smaller
than the random variable $Y$ if
$\Pr( X\le z)\ge\Pr(Y\le z)$ for all $z\in\real$
and in that case we also say that the distribution
of $X$ is stochastically smaller than that of $Y$.
There is a related notion of positive upper orthant
dependence as well as two related notions of 
negative orthant dependence, both upper  and lower.

In one dimension, the points
$0,1/n,\dots,(n-1)/n$ are NNLD.
As mentioned earlier, $n=b^m$ Hammersley points
in base $b\ge 2$ and dimension $d=2$ are NNLD
\cite{gaba:1967}.
Those Hammersley points are constructed as follows.  
For $0\le i<n$
write $i=\sum_{k=1}^ma_i(k)b^{k-1}$ for digits $a_i(k)\in\{0,1,\dots,b-1\}$
and set $i' =\sum_{k=1}^ma_i(m-k+1)b^{k-1}$.
Then the $i$'th such Hammersley point is
$\bsx_i=\bigl(i/n,i'/n\bigr)$
for $i=0,1,\dots,n-1$.  Some further properties of the Hammersley points, related to the work of \cite{gaba:1967}, are given by \cite{declerck:1986}.

We will also make use of a complementary property:
non-positive local discrepancy.  
\begin{definition}
The point set $P_n$ with points $\bsx_0,\dots,\bsx_{n-1}$
has non-positive local discrepancy  (NPLD) if
\begin{align}\label{eq:defpnld}
\delta(\bsz)\le0
\end{align}
for all $\bsz\in[0,1]^d$.
\end{definition}

One of our techniques is to take NNLD points $\bsx_i$ and reflect them to $\bsone-\bsx_i$ to get points that oversample rectangular regions near $\bsone$. In doing so we will need to take care of two issues.  One is that for $d\ge2$, the complement of a hyperrectangle $[\bszero,\bsa)$ under this transformation is not another hyperrectangle. The other is that even for $d=1$, the complement of a half open interval $[0,a)$ is a closed interval $[a,1]$.  

To handle these issues we make two observations below.  First, for an $n$-point set $P_n \subset [0,1]^d$ let us additionally define the local discrepancy with respect to closed boxes:
 $$
 \overline{\delta}(\bsz) = \overline{\delta}(\bsz; P_n) 
 =\wh\vol([\bszero,\bsz])-\vol([\bszero,\bsz]).
 $$

\begin{observation}\label{obs:nnldandclosesdiscrep}
The point set $P_n$ has the NNLD property if and only if 
 \begin{equation}\label{closed_NNLD}
 \overline{\delta}(\bsz)  \ge 0
 \hspace{3ex}
 \text{for all $\bsz\in [0,1]^d$.}
 \end{equation}
This is due to the following reasoning: First, we always have
$\overline{\delta}(\bsz) \ge \delta(\bsz)$ for all $\bsz \in [0,1]^d$. Thus the NNLD property of  $P_n$
implies \eqref{closed_NNLD}. For the converse, we assume that $P_n$ satisfies \eqref{closed_NNLD} and consider two cases. If 
 $z_j = 0$ for some $j\in [d]$ then $\delta(\bsz)=0$. If instead $\min_{j\in[d]}z_j>0$ then
 $$
 \delta(\bsz) = \lim_{\varepsilon \downarrow 0} \overline{\delta}(\bsz - \varepsilon \bsone). 
 $$
 Either way, \eqref{eq:defnnld} holds,
 i.e., $P_n$ is NNLD.
 \end{observation}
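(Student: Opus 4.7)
The key structural observation is that $\overline{\delta}$ and $\delta$ differ only through how the boundary $\partial[\bszero,\bsz] = [\bszero,\bsz]\setminus[\bszero,\bsz)$ is treated. The Lebesgue volume is indifferent to this boundary since it has $d$-dimensional measure zero, but the empirical measure $\wh\vol$ can count points lying on it. So I would first record the identity
\[
\overline\delta(\bsz)-\delta(\bsz)
=\wh\vol([\bszero,\bsz])-\wh\vol([\bszero,\bsz))
=\frac{1}{n}\#\{i : \bsx_i\in[\bszero,\bsz]\setminus[\bszero,\bsz)\}\ge 0,
\]
which is clearly non-negative. This yields the easy implication: if $P_n$ is NNLD, then $\overline\delta(\bsz)\ge\delta(\bsz)\ge 0$ for every $\bsz$.

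For the converse, the plan is to recover $\delta(\bsz)$ as a limit of $\overline\delta$ evaluated at slightly shrunken anchors. I would split into two cases. If $z_j = 0$ for some coordinate $j$, then $[\bszero,\bsz)$ is empty as a Cartesian product (since $[0,0)=\emptyset$), so both the empirical and Lebesgue volumes vanish and $\delta(\bsz) = 0$. Otherwise $\min_j z_j > 0$, and for all sufficiently small $\varepsilon>0$ the vector $\bsz - \varepsilon\bsone$ lies in $[0,1]^d$. Since
\[
[\bszero,\bsz) = \bigcup_{\varepsilon>0}[\bszero,\bsz - \varepsilon\bsone],
\]
continuity of Lebesgue measure from below gives $\vol([\bszero,\bsz-\varepsilon\bsone])\to\vol([\bszero,\bsz))$ as $\varepsilon\downarrow 0$. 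For the empirical side, because $P_n$ is finite, no point $\bsx_i$ has a coordinate equal to $z_j$ for more than finitely many $\varepsilon$ values, so $\wh\vol([\bszero,\bsz-\varepsilon\bsone])$ is eventually constant and equal to $\wh\vol([\bszero,\bsz))$. Combining these limits gives $\delta(\bsz)=\lim_{\varepsilon\downarrow 0}\overline\delta(\bsz-\varepsilon\bsone)\ge 0$, as required.

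The argument is essentially routine; the only minor subtlety is making sure the limit of the empirical measure really produces the half-open count, which relies only on $P_n$ being finite and on $\bsz$ being strictly positive. No deeper obstacle is expected, which is why the statement is recorded as an observation rather than a lemma.
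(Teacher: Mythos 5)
Your proof is correct and follows the same route as the paper's: the one-line inequality $\overline{\delta}\ge\delta$ for the forward direction, and for the converse the same case split ($z_j=0$ for some $j$ versus $\min_j z_j>0$) with $\delta(\bsz)$ recovered as $\lim_{\varepsilon\downarrow 0}\overline{\delta}(\bsz-\varepsilon\bsone)$. You merely spell out the details the paper leaves implicit (the boundary-counting identity and the justification of the limit via finiteness of $P_n$), all of which check out.
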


 \begin{observation}\label{obs:npldalmostiff}
 The condition
 \begin{equation}\label{closed_NPLD}
 \overline{\delta}(\bsz)  \le 0
 \hspace{3ex}
 \text{for all $\bsz\in [0,1]^d$}
 \end{equation}
 implies that $P_n$ has the NPLD property, since $\delta(\bsz) \le \overline{\delta}(\bsz)$ for all $\bsz \in [0,1]^d$.
As a partial converse, if $P_n\subset[0,1)^d \cup \{\bsone\}$, then the NPLD property also implies condition \eqref{closed_NPLD}.
 Indeed, in that case we have $\overline{\delta}(\bsone) = 0$ and 
 $$
 \overline{\delta}(\bsz) = \lim_{\varepsilon \downarrow 0} \delta(\bsz +  \varepsilon \bsone) \le 0
 \hspace{3ex}\text{for all $\bsz \in [0,1)^d$}.
 $$
Now consider for any $\bsz \in [0,1)^d$ and any $\emptyset \neq u\subsetneq [d]$ the closed anchored box $[\bszero, (\bsz_u{:}\bsone_{-u})]$. Due to $P_n\subset[0,1)^d \cup \{\bsone\}$, it contains exactly the same number of points from $P_n$ as the anchored box $[\bszero, (\bsz_u{:}\bsz^*_{-u})]$,  where $\bsz^*$ is defined by 
 $ z_j^* := \max( \{x_{0,j}, \ldots, x_{n-1,j}\}\setminus \{1\})$
 for $j=1, \ldots,d$ taking $z_j^*=0$ in case it is $\max(\emptyset)$.
Consequently, we have
 $$
 \overline{\delta}(\bsz_u{:}\bsone_{-u}) \le \overline{\delta}(\bsz_u{:}\bsz^*_{-u}) \le 0.
 $$

Hence for  $d=1$ we have equivalence of \eqref{closed_NPLD} and NPLD for all $P_n \subset [0,1]$. 
But if $d\ge 2$, then for arbitrary $P_n \subset [0,1]^d$ not contained in $[0,1)^d \cup \{\bsone\}$ the NPLD property does not necessarily imply condition \eqref{closed_NPLD}, as a trivial example with 
$d=2$, $n=1$, $P_n= \{(1,1/2)\}$ shows: $\delta(\bsz) = - \vol([\bszero,\bsz)) \le 0$ for all $\bsz \in [0,1]^d$,
but $\overline{\delta}((1,1/2)) = 1-1/2 =1/2 >0$.
\end{observation}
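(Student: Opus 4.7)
The plan is to handle the easy direction first: for every $\bsz\in[0,1]^d$ one has $\overline{\delta}(\bsz)\ge\delta(\bsz)$, because enlarging the half-open box $[\bszero,\bsz)$ to $[\bszero,\bsz]$ can only add points of $P_n$ to the count while the Lebesgue measure is unchanged. Hence \eqref{closed_NPLD} immediately implies NPLD. For the partial converse, assume $P_n\subset[0,1)^d\cup\{\bsone\}$ and NPLD. At $\bsz=\bsone$ both $\wh\vol$ and $\vol$ equal $1$, so $\overline{\delta}(\bsone)=0$. For $\bsz\in[0,1)^d$ I would approximate the closed box from outside: for $\varepsilon>0$ small enough that $\bsz+\varepsilon\bsone\in[0,1]^d$, note that $\vol([\bszero,\bsz+\varepsilon\bsone))\to\vol([\bszero,\bsz])$ by continuity, while the empirical count stabilizes at $\wh\vol([\bszero,\bsz])$ (a point $\bsx_i$ lies in $[\bszero,\bsz+\varepsilon\bsone)$ for all small $\varepsilon$ iff $\bsx_i\le\bsz$). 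Thus $\overline{\delta}(\bsz)=\lim_{\varepsilon\downarrow 0}\delta(\bsz+\varepsilon\bsone)\le 0$ by NPLD.

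The main obstacle is $\bsz$ of mixed form $\bsz_u{:}\bsone_{-u}$ with $\varnothing\ne u\subsetneq[d]$, since the perturbation above would push the coordinates where $z_j=1$ outside $[0,1]^d$. Here I would replace the $1$'s in positions $-u$ by $z_j^*:=\max(\{x_{0,j},\dots,x_{n-1,j}\}\setminus\{1\})$, setting $z_j^*=0$ if that set is empty. Because $u\ne\varnothing$ and $z_j<1$ for $j\in u$, the point $\bsone$ lies in neither $[\bszero,\bsz_u{:}\bsone_{-u}]$ nor $[\bszero,\bsz_u{:}\bsz^*_{-u}]$; and for $\bsx_i\in[0,1)^d$, the constraints $x_{i,j}\le 1$ and $x_{i,j}\le z_j^*$ for $j\in -u$ are both automatically satisfied by the choice of $z_j^*$. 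Hence these two closed boxes contain exactly the same points of $P_n$. The box with the $1$'s has the larger volume, so $\overline{\delta}(\bsz_u{:}\bsone_{-u})\le\overline{\delta}(\bsz_u{:}\bsz^*_{-u})\le 0$, the last inequality by the previous case since $\bsz_u{:}\bsz^*_{-u}\in[0,1)^d$.

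Finally, when $d=1$ the inclusion $P_n\subset[0,1)\cup\{1\}=[0,1]$ is automatic, so NPLD and \eqref{closed_NPLD} are equivalent in that setting. For $d\ge 2$, to show the converse can fail without the structural assumption, I would exhibit a minimal counterexample with one point lying on a proper face of the cube, say $\bsx_0=(1,1/2)$ in dimension two. The half-open box $[\bszero,\bsz)$ never contains this point (that would require $z_1>1$), so NPLD holds trivially; but at $\bsz=(1,1/2)$ the closed box does contain the point while having volume only $1/2$, giving $\overline{\delta}(\bsz)=1/2>0$ and hence a failure of \eqref{closed_NPLD}.
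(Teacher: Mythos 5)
Your proposal is correct and follows essentially the same route as the paper's own argument: the monotonicity $\delta(\bsz)\le\overline{\delta}(\bsz)$ for the easy direction, the outer limit $\overline{\delta}(\bsz)=\lim_{\varepsilon\downarrow 0}\delta(\bsz+\varepsilon\bsone)$ for $\bsz\in[0,1)^d$, the replacement of the coordinates equal to $1$ by $z_j^*$ to handle mixed points $\bsz_u{:}\bsone_{-u}$, and the same counterexample $\{(1,1/2)\}$ for $d\ge2$. Your justification that the two closed boxes contain the same points (excluding $\bsone$ via $u\neq\emptyset$ and noting the $-u$ constraints are automatic for points in $[0,1)^d$) is just a slightly more explicit rendering of the paper's reasoning.
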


For $d=1$ if the points in $\tilde P_n$ 
are $1-x_i$ for the points $x_i$ of $P_n$,
then
$$
\overline{\delta}(z;P_n)+\delta(1-z;\tilde P_n) = 0,
$$
i.e.,
$\overline{\delta}(z;P_n)=-\delta(1-z;\tilde P_n)$ for all $z\in [0,1]$.
Then due to Observations~\ref{obs:nnldandclosesdiscrep} and~\ref{obs:npldalmostiff},  reflections of NNLD points are NPLD points
and vice versa for $d=1$.
For $d>1$, if we have positive discrepancy in a box $[\bszero,\bsa)$ prior to reflection, then after reflection a negative discrepancy is only established for $[0,1]^d\setminus(\bsone-\bsa,\bsone]$. Such sets are not generally of the form $[\bszero,\bsb)$ required for the NPLD property; cf. also the discussion in the last paragraph of Section~\ref{SEC:ARV}.

In addition to reflection, we consider another useful transformation.
Let $\tilde\bsx_i$ be the base $b$ Hammersley points for $i=0,\dots,n-1$ where $n=b^m$ and $d=2$. Then \cite{dick:krit:2006} shows that
\begin{align}\label{eq:npldpts}
\bsx_i = (1/n+\tilde x_{i,1},1-\tilde x_{i,2})
\end{align}
are NPLD.

\subsection{Completely monotone functions}
Here we define completely monotone functions,
describing them in words before giving the formal
definition.  If $\bsx\le\bsz$,
then a completely monotone function can increase
but not decrease if any $x_j$ is replaced by $z_j$.
That is $f(\bsx_{-j}{:}\bsz_j)-f(\bsx)\ge0$
always holds.  Next, the size of this difference
can only be increasing as some other component
$x_k$ is increased to $z_k$, so certain differences of differences must also be non-negative. This condition must hold for anywhere from $1$ to $d$ applications of differencing. The $|u|$-fold differences of differences are alternating sums of the form
$$
\Delta_u(\bsx,\bsz)=
\sum_{v\subseteq u}(-1)^{|u-v|}f(\bsx_{-v}{:}\bsz_v).
$$
Note that the coefficient of $f(\bsx_{-u}{:}\bsz_u)$ 
in $\Delta_u(\bsx,\bsz)$ is positive.

\begin{definition}
The function $f:[0,1]^d\to\real$ is completely monotone if
$\Delta_u(\bsx,\bsz)\ge0$
for all non-empty $u$ and all $\bsx,\bsz\in[0,1]^d$
with $\bsx_u\le \bsz_u$.
\end{definition}

In \cite{aist:dick:2015}, Aistleitner and Dick use completely monotone functions to analyze the total variation of $f$ in the sense of Hardy and Krause, denoted by $V_{\hk}(f)$. See \cite{variation} for an account.
From Theorem 2 of \cite{aist:dick:2015}, if
$V_{\hk}(f)<\infty$ then we can write
$$
f(\bsx) = f(\bszero)+f^+(\bsx)-f^-(\bsx)
$$
where $f^+$ and $f^-$ are completely monotone
functions with $f^+(\bszero)=f^-(\bszero)=0$.
They call $f^+-f^-$ the Jordan decomposition
of $f$.  The functions $f^\pm$ are uniquely
determined. 

If $f$ is right-continuous and $V_{\hk}(f)<\infty$
then $f(\bsx)=\nu([\bszero,\bsx])$ for a uniquely
determined signed Borel measure $\nu$, by
Theorem 3 of \cite{aist:dick:2015}. 
Let this signed measure have Jordan decomposition
$\nu=\nu^+-\nu^-$ for ordinary (unsigned) Borel measures
$\nu^\pm$.
Then $f^\pm(\bsx)=\nu^{\pm}([\bszero,\bsx]\setminus\{\bszero\})$. 

The completely monotone functions that
we study take the form
\begin{align}\label{eq:ourcmf}
f(\bsx)=f(\bszero) + \lambda\, \nu([\bszero,\bsx])
\end{align}
where $\nu$ is an arbitrary probability measure on $[0,1]^d$ (or, more precisely, on the Borel $\sigma$-algebra of $[0,1]^d$)
and $\lambda\ge0$. 
Note that every right-continuous completely monotone function $f$ on $[0,1]^d$ can be represented in that way, see, e.g., \cite[II.5.11 Korrespondenzsatz, p.~67]{elstrodt:2018}. 

If $\nu$ is absolutely continuous with respect to the Lebesgue measure, then we may represent $f$, due to the Radon-Nikodym theorem,  as
\begin{align}\label{eq:ourcmf_abs_cont}
f(\bsx)=f(\bszero) + \lambda \int_{[\bszero,\bsx]}
g(\bsz)\rd\bsz
\end{align}
where $g$ is a probability density on $[0,1]^d$, i.e., a non-negative Lebesgue integrable function on  $[0,1]^d$ with integral equal to one.

\subsection{Basic result}

Here we present the basic integration bounds.
To bracket $\mu$ we use up to $2n$ function evaluations
using $n$ each for the lower and upper limits.
For some constructions it is possible that 
some function evaluations might be usable in
both limits, reducing the cost of computation.
For $d=1$ we only need $n+1$ evaluations.

\begin{theorem}\label{thm:basic}
Let $f$ be a completely monotone function of the form
\eqref{eq:ourcmf}.
Let $P_n= \{\bsx_0,\dots,\bsx_{n-1}\} \subset [0,1]^d$, 
and put $\widetilde{P}_n = \{\bsone - \bsx_0,\dots,\bsone - \bsx_{n-1}\}$.
\begin{itemize}
\item[(i)] 
Let $\widetilde{P}_n$ have non-negative
local discrepancy.
Then
\begin{align}\label{eq:upperbound}
\overline\mu =\hat\mu = \frac1n\sum_{i=0}^{n-1}f(\bsx_i)
\ge \int_{[0,1]^d}f(\bsx)\rd\bsx.
\end{align}
\item[(ii)] 
Let $P_n$ have non-positive
local discrepancy.
If additionally either $P_n \subset [0,1)^d \cup \{\bsone\}$ or 
$\nu$ is absolutely continuous with respect to the Lebesgue measure, then
\begin{align}\label{eq:lowerbound}
\underline\mu=\frac1n\sum_{i=0}^{n-1}f(\bsone-\bsx_i)
\le \int_{[0,1]^d}f(\bsx)\rd\bsx.
\end{align}
\end{itemize}
\end{theorem}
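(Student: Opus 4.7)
The plan is to substitute the integral representation
\eqref{eq:ourcmf} into both sides of the bounds and reduce the inequalities
to statements about the closed-box local discrepancy $\overline{\delta}$,
then invoke Observations~\ref{obs:nnldandclosesdiscrep}
and~\ref{obs:npldalmostiff}. The term $f(\bszero)$ appears identically on
both sides and cancels, so the question is whether the QMC average of
$\nu([\bszero,\bsx_i])$ (respectively $\nu([\bszero,\bsone-\bsx_i])$) exceeds
(respectively falls short of) $\int \nu([\bszero,\bsx])\rd\bsx$.

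For part (i), I would apply Fubini to write
$\frac1n\sum_i \nu([\bszero,\bsx_i])
=\int \wh\vol_{P_n}([\bsz,\bsone])\rd\nu(\bsz)$
and $\int \nu([\bszero,\bsx])\rd\bsx = \int \vol([\bsz,\bsone])\rd\nu(\bsz)$.
The key identity is that $\bsx_i\ge\bsz$ is equivalent to
$\bsone-\bsx_i\le\bsone-\bsz$, which turns both quantities above into
measures of closed anchored boxes for $\widetilde{P}_n$ evaluated at
$\bsone-\bsz$. Their difference is therefore
$\overline{\delta}(\bsone-\bsz;\widetilde{P}_n)$, which is $\ge 0$ for every
$\bsz\in[0,1]^d$ by Observation~\ref{obs:nnldandclosesdiscrep} applied to the
NNLD set $\widetilde{P}_n$. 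Integrating against $\lambda\rd\nu(\bsz)\ge 0$
yields \eqref{eq:upperbound}.

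Part (ii) proceeds symmetrically. The change of variable $\bsx_i \mapsto
\bsone - \bsx_i$ gives
$\frac1n\sum_i\nu([\bszero,\bsone-\bsx_i])=\int\wh\vol_{P_n}([\bszero,
\bsone-\bsz])\rd\nu(\bsz)$, while the target integral is
$\int\vol([\bszero,\bsone-\bsz])\rd\nu(\bsz)$. The difference is
$\int \overline{\delta}(\bsone-\bsz;P_n)\rd\nu(\bsz)$, and I need this to be
$\le 0$. Under the first hypothesis $P_n\subset[0,1)^d\cup\{\bsone\}$,
Observation~\ref{obs:npldalmostiff} tells us $\overline{\delta}(\bsy;P_n)\le 0$
pointwise, so the integral is nonpositive. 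Under the second hypothesis
(that $\nu$ is absolutely continuous), I would argue that
$\overline{\delta}(\bsy;P_n)$ and $\delta(\bsy;P_n)$ differ only on the set
of $\bsy$ having some coordinate equal to some $x_{i,j}$, and in the change
of variable $\bsy=\bsone-\bsz$ this pulls back to a finite union of
coordinate hyperplanes in $\bsz$, hence a Lebesgue-null, $\nu$-null set. On
its complement NPLD gives $\overline{\delta}(\bsone-\bsz;P_n)=
\delta(\bsone-\bsz;P_n)\le 0$, and integrating gives \eqref{eq:lowerbound}.

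The main subtlety is the open-versus-closed box bookkeeping in part~(ii):
the empirical average naturally produces the closed-box discrepancy
$\overline{\delta}$, whereas NPLD is stated with the half-open
$\delta$. This is precisely the gap bridged by
Observation~\ref{obs:npldalmostiff}, and the two alternative hypotheses in
the statement correspond to the two ways of closing that gap (pointwise
from structural restrictions on $P_n$, or almost-everywhere from
smoothness of $\nu$). Everything else is bookkeeping via Fubini and the
reflection identity $\bsa\le\bsb\iff\bsone-\bsa\ge\bsone-\bsb$.
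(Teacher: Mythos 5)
Your proposal is correct and follows essentially the same route as the paper: reduce to $\lambda\,\nu$, apply Fubini/Tonelli to express both $\hat\mu$ and $\mu$ as integrals of (empirical and Lebesgue) volumes of boxes $[\bsz,\bsone]$, use the reflection $\bsz\le\bsx\iff\bsone-\bsx\le\bsone-\bsz$ to identify the difference as $\int\overline{\delta}(\bsone-\bsz;\widetilde P_n)\rd\nu(\bsz)$, and invoke Observations~\ref{obs:nnldandclosesdiscrep} and~\ref{obs:npldalmostiff}. The only cosmetic differences are that the paper proves part~(ii) for $\widetilde P_n$ and then swaps roles, whereas you work with $P_n$ directly, and you spell out the null-set argument for the absolutely continuous case that the paper merely asserts.
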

\begin{proof}
Without loss of generality take $f(\bszero)=0$
and $\lambda =1$.
Consequently, $f(\bsx) = \nu([\bszero, \bsx])$ for all $\bsx\in [0,1]^d$.
We obtain
\begin{align*}
\mu &= \int_{[0,1]^d} \nu([\bszero, \bsx]) \rd\bsx
 = \int_{[0,1]^d}\int_{[0,1]^d} \ind_{\bsz\le\bsx}
 \rd\nu(\bsz)\rd\bsx.
 \end{align*}
Reversing the order of integration,
\begin{align}\label{eq:mubasic}
\mu&= \int_{[0,1]^d}\int_{[0,1]^d}\ind_{\bsz\le\bsx}\rd\bsx\rd\nu(\bsz)
 =\int_{[0,1]^d}\vol([\bsz,\bsone])\rd\nu(\bsz).
\end{align}
Similarly,
\begin{align*}
\hat\mu
=\frac1n\sum_{i=0}^{n-1}\nu([\bszero, \bsx_i])
=\frac1n\sum_{i=0}^{n-1}\int_{[0,1]^d}\ind_{\bsz\le\bsx_i}\rd\nu(\bsz)
\end{align*}
from which
\begin{align}\label{eq:muhatbasic}
\hat \mu&=\int_{[0,1]^d} \frac1n\sum_{i=0}^{n-1}\ind_{\bsz\le\bsx_i}\rd\nu(\bsz)
=\int_{[0,1]^d}\wh\vol([\bsz,\bsone])\rd \nu(\bsz).
\end{align}
Combining~\eqref{eq:mubasic} and~\eqref{eq:muhatbasic}
the integration error now satisfies
\begin{align}\label{difference_mu_hat_mu}
\hat\mu-\mu&=\int_{[0,1]^d}
\Bigl(\wh\vol([\bsz,\bsone])-\vol([\bsz,\bsone]\Bigr) \rd\nu(\bsz)\notag\\
&=\int_{[0,1]^d}\overline{\delta}(\bsone - \bsz; \widetilde{P}_n)\rd\nu(\bsz),
\end{align}
where $\overline{\delta}(\bsone - \bsz; \widetilde{P}_n)$ is the local discrepancy of
$\widetilde{P}_n$ with respect to the anchored closed box $[\bszero, \bsone - \bsz]$. 
Recall that  $\nu$ is a positive measure.

For part
(i), let $\widetilde{P}_n$ have the  NNLD property.  Due to Observation~\ref{obs:nnldandclosesdiscrep}
we have $\overline{\delta}(\bsone - \bsz; \widetilde{P}_n) \ge 0$ for all $\bsz\in [0,1]^d$.
Hence $\hat\mu\ge\mu$, establishing~\eqref{eq:upperbound}.

For part (ii), let $\widetilde{P}_n$ have the NPLD property. If additionally $\widetilde{P}_n \subset [0,1)^d \cup \{\bsone\}$, 
then Observation~\ref{obs:npldalmostiff} ensures that $\overline{\delta}(\bsone - \bsz; \widetilde{P}_n) \le 0$ for all $\bsz\in [0,1]^d$,
establishing $\hat\mu \le \mu$. If instead $\nu$ is absolutely continuous with respect to the Lebesgue measure, then we can replace
$\overline{\delta}(\bsone - \bsz; \widetilde{P}_n)$  in \eqref{difference_mu_hat_mu} by $\delta(\bsone - \bsz; \widetilde{P}_n)$ without changing the integral. Hence we get again $\hat\mu \le \mu$.
In any case, exchanging the roles of $P_n$ and $\widetilde{P}_n$ establishes~\eqref{eq:lowerbound}. 
\end{proof}

Theorem~\ref{thm:basic} provides an upper bound for $\mu$ when sampling from reflected NNLD points.
This bound will approach $\mu$ as $n\to\infty$
if those points also satisfy $D_n^*\to0$ as $n\to\infty$,
cf. \eqref{eq:koksmahlawka}.
To get a lower bound we can use reflected
NPLD points, provided that either $\nu$ is absolutely continuous or those points all belong to $[0,1)^d\cup\{\bsone\}$.
If $d=2$, then the NPLD points could be
those given by equation~\eqref{eq:npldpts}.  

Both bounds in Theorem~\ref{thm:basic} result from local discrepancy properties of reflected points, $\bsone-\bsx_i$. 
As noted in Observation~\ref{obs:npldalmostiff}, reflecting
NNLD points 
does not generally produce NPLD points (and vice versa).
Also, while NNLD and NPLD properties sound similar
we find in Section~\ref{sec:netconstructions}
that NPLD points are not as simple to
construct as NNLD points.  

\subsection{Example}\label{sec:example}

Here is a simple example to illustrate these
bounds.
The integrand is
known to be completely monotone because it is
a multivariate cumulative distribution function (CDF).  
For $\bsx\in[0,1]^2$ we take
\begin{equation}\label{ex:f1}
f(\bsx) = \Pr( X_1\le x_1, X_2\le x_2)
\end{equation}
for $\bsX\sim\dnorm(0,\Sigma)$ with $\Sigma=
\bigl(\begin{smallmatrix}1&\rho\\\rho&1\end{smallmatrix}\bigr)$
using $\rho=0.7$. Due to \eqref{eq:upperbound}, we can compute an upper bound for $\mu=\int_{[0,1]^2}f(\bsx)\rd\bsx$ by sampling at points
$\bsone-\bsx_i$ where $\bsx_i\in[0,1]^2$ are 
the first $n=b^m$ Hammersley points
in any base $b\ge2$.
We can compute a lower bound for $\mu$ 
by first transforming Hammersley
points via
\eqref{eq:npldpts}
to get NPLD points $\bsx_i$ 
and then sampling at $\bsone-\bsx_i$.
Note that the point sets in 
these bounds are not extensible
in that the points for $n=b^m$ are not
necessarily reused for $n=b^{m+1}$. 

Figure~\ref{fig:example1} shows the results for $n=2^m$ and $1\le m\le13$.  Over the given range,
$n(\overline\mu-\underline\mu)$ increases with $n$ while
$n(\overline\mu-\underline\mu)/\log(n)$ decreases with $n$.
The computed upper and lower bounds for $n=2^{13}$ show that
$$0.5618735\le\mu\le 0.5619890.$$
This function is so smooth and the dimension is so
small that comparable accuracy could be attained
by standard low dimensional integration methods
with many fewer function evaluations.  However, these 
computations 
took approximately five seconds
in R on a MacBook Air M2 laptop, using the {\tt mvtnorm}
package \cite{mvtnorm:book,mvtnorm} to compute $f$. 
Because the computation was so inexpensive, any
inefficiency does negligible harm in this case and
a more efficient algorithm would ordinarily come
without guaranteed error bounds.

\begin{figure}[t!]
\centering
\includegraphics[width=.9\hsize]{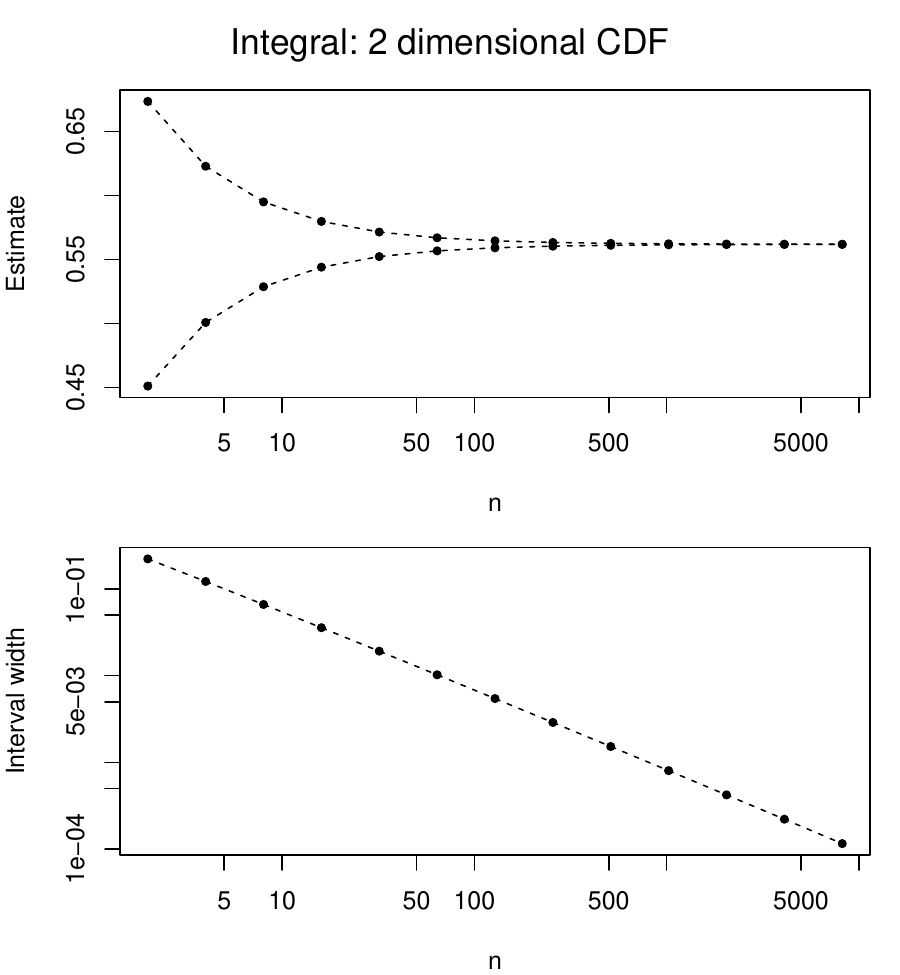}
\caption{\label{fig:example1}
The top panel shows upper and lower bounds for $\mu=\int_{[0,1]^2}f(\bsx)\rd\bsx$ using transformations
of the Hammersley points and $n=2^m$ for $1\le m\le 13$.
The bottom panel plots the difference between those upper
and lower bounds versus $n$, on a logarithmic scale.
}
\end{figure}

\section{More about NNLD points}\label{sec:nnldproperties}

Here we collect some observations about properties
that any $n\ge1$ NNLD points in $[0,1]^d$ 
must necessarily have. Then we use those properties to describe constraints that the NNLD property imposes on customary QMC constructions (lattices and digital nets). Finally we show that the NNLD and NPLD properties are preserved by tensor products.

The first and most obvious property of NNLD points is that
$\bszero$ must be one of those points or else
there is a box $B=[\bszero,\bsa)$ 
with $0=\wh\vol(B)<\vol(B)$ so that $\delta(\bsa)<0$.
Next it must be true that all $n$ points belong
to $[0,1-1/n]^d$. Suppose to the contrary
that $x_{i1}>1-1/n$ for some $0\le i <n$.
Then for some $\epsilon>0$ there exists
$B=[0,1-1/n+\epsilon)\times[0,1]^{d-1}$
with $\wh\vol(B) \le (n-1)/n<\vol(B)$ so that $\bsx_i$
are not NNLD.  The same argument applies if
$x_{ij}>1-1/n$ for any $i$ and any $j$.

Trivial constructions of NNLD points have
$\bsx_i=(i/n)\bsone\in[0,1]^d$
for $0\le i<n$.  We observe that these points
as well as the Hammersley points
for $d=2$ have variables that are positively
correlated.  We will use a general positive
dependence property in Sections~\ref{sec:netconstructions} and~\ref{sec:rankone}
to construct more NNLD point sets.  The NPLD
construction in~\eqref{eq:npldpts} creates a negative lower orthant dependence property for
the components of $\bsx_i\in[0,1]^2$.

Many of the constructions $P_n$ we consider are
projection regular by which we mean that
the projections of $P_n$ onto each single coordinate are
equal to the full set $\{0,1/n,2/n,\dots,(n-1)/n\}$.
Projection regularity is usually considered  advantageous in QMC, as it guarantees a certain structure and even distribution of the integration node set, and simplifies the derivation of error bounds. However, 
combined with the NNLD property, it imposes a
constraint on the point set that we will
use to rule out certain constructions.

\begin{proposition}\label{lem:upper_diag_point}
Let $P_n$ be a point set with $n$ points in $[0,1)^d$ that is projection regular. If $P_n$ has the NNLD property, then 
$P_n$ must contain the point 
$$
\bsx_*=\left(\frac{n-1}{n},\frac{n-1}{n},\ldots,\frac{n-1}{n}\right).
$$ 
\end{proposition}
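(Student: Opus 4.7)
The plan is to use projection regularity to identify, for each coordinate $j\in[d]$, the unique point $\bsa_j\in P_n$ whose $j$-th coordinate equals $(n-1)/n$, and then to show via an NNLD-violating box that all the $\bsa_j$ must coincide. Their common value can only be $\bsx_*$.

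First I would set up the following. Projection regularity together with $P_n\subset[0,1)^d$ says that the $j$-th coordinates of the $n$ points form the set $\{0,1/n,\dots,(n-1)/n\}$, each appearing exactly once. So for each $j\in[d]$ there is a unique $\bsa_j\in P_n$ with $(\bsa_j)_j=(n-1)/n$. The proposition reduces to the claim that $\bsa_1=\bsa_2=\cdots=\bsa_d$, because their common value then has every coordinate equal to $(n-1)/n$, i.e., is $\bsx_*$.

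For the main argument, suppose for contradiction that $\bsa_j\neq\bsa_k$ for some pair $j\neq k$; relabel so $j=1$, $k=2$. Uniqueness of the point with second coordinate $(n-1)/n$ forces $(\bsa_1)_2<(n-1)/n$ (otherwise $\bsa_1$ would also have second coordinate $(n-1)/n$ and so equal $\bsa_2$); symmetrically $(\bsa_2)_1<(n-1)/n$. Now consider the anchored box $[\bszero,\bsz)$ with
\[
z_1=z_2=\tfrac{n-1}{n},\qquad z_j=1\ \text{for }j\ge 3.
\]
Since $P_n\subset[0,1)^d$, the constraints $x_{ij}<1$ for $j\ge 3$ are automatic, so a point $\bsx_i$ lies outside $[\bszero,\bsz)$ iff $x_{i1}=(n-1)/n$ or $x_{i2}=(n-1)/n$, i.e.\ iff $\bsx_i\in\{\bsa_1,\bsa_2\}$. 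Because these two points are distinct, exactly two points are excluded, giving
\[
\wh\vol([\bszero,\bsz))=\frac{n-2}{n},\qquad \vol([\bszero,\bsz))=\Bigl(\frac{n-1}{n}\Bigr)^2.
\]
The NNLD inequality $\wh\vol([\bszero,\bsz))\ge\vol([\bszero,\bsz))$ then simplifies to $n(n-2)\ge (n-1)^2$, i.e.\ $0\ge 1$, a contradiction.

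Hence no two $\bsa_j$'s can differ, so they are all equal to a single point $\bsa\in P_n$, and $\bsa$ has every coordinate equal to $(n-1)/n$, which is precisely $\bsx_*$. The only step requiring care is the bookkeeping in the box count, where one must use both projection regularity (to know no other points hit $(n-1)/n$ in coordinates $1$ or $2$) and the containment $P_n\subset[0,1)^d$ (to avoid losing points through the open upper faces $x_j<1$ for $j\ge 3$); everything else is a direct algebraic contradiction.
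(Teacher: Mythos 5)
Your proof is correct and follows essentially the same route as the paper's: both arguments test the NNLD inequality on the box $[\bszero,\bsz)$ with $z_1=z_2=(n-1)/n$ and $z_j=1$ otherwise, count that at least two points must fall outside it, and reach the contradiction $(n-2)/n<((n-1)/n)^2$. The only cosmetic difference is that you phrase the counting via the unique points $\bsa_1,\bsa_2$ achieving coordinate value $(n-1)/n$, while the paper phrases it via a two-dimensional projection missing $((n-1)/n,(n-1)/n)$.
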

\begin{proof}
 Suppose that $P_n$ is projection regular and does not contain $\bsx_*$. Then there must exist at least one two dimensional projection $Q_n$ of $P_n$ which does not contain the point $\bsy_*:=(\frac{n-1}{n},\frac{n-1}{n})$. Without loss of generality, assume that $Q_n$ is the projection of $P_n$ onto the first and second coordinates.
 
 This implies, due to projection regularity, that at least two points of $Q_n$ do not lie in the box $[\bszero,\bsy_*)$. Thus,
$$
\delta(\bsy_*) = \wh\vol([\bszero,\bsy_*))-\vol([\bszero,\bsy_*)) \le \frac{n-2}{n}-\frac{(n-1)^2}{n^2}=-\frac{1}{n^2}.
$$
Therefore, $P_n$ has negative local discrepancy for the box
$[\bszero,\bsy_*)\times [0,1)^{d-2}$.
\end{proof}

Proposition~\ref{lem:upper_diag_point} has
some consequences for well known QMC points. We will consider digital nets and integration lattices.
The most widely used and studied integration
lattices are rank one lattices.
Given a generating vector $\bsg=(g_1,\dots,g_d)\in\natu^d$ and a sample size $n\ge1$, a rank one lattice uses points
$$
\bsx_i =\Bigl( \frac{g_1i}n,\frac{g_2i}n,\dots,\frac{g_di}n\Bigr) \bmod 1
$$
for $0\le i<n$ where the modulus operation above takes the
fractional part of its argument.
These $n$ points form a group under addition modulo 1. More general integration lattices having ranks between $1$ and $d$ can also be constructed \cite{dick:krit:pill:2022,  nied92, sloanjoe}. Lattice rules with ranks larger than $1$ are seldom used. They also have the group structure.

\begin{corollary}\label{cor:pro_reg_lattice_NNLD}
 For fixed $d,n \ge 1$ there is only one  projection regular lattice point set in $[0,1)^d$ that consists of $n$ points 
 and has the NNLD property,
namely the lattice point set
$$
\left\{ \bszero, \frac{1}{n} \bsone, \frac{2}{n} \bsone, \ldots, \frac{n-1}{n}\bsone \right\},
$$
whose points all lie on the main diagonal of the $d$-dimensional unit cube $[0,1)^d$.
\end{corollary}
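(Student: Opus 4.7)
The plan is to combine Proposition~\ref{lem:upper_diag_point} with the abelian group structure that every integration lattice enjoys. First, I would recall that a lattice point set $L\subset[0,1)^d$ with $n$ points is closed under coordinatewise addition modulo $\bsone$, so it forms a finite abelian group of order $n$ containing $\bszero$. Since $L$ is assumed projection regular and NNLD, Proposition~\ref{lem:upper_diag_point} forces the point $\bsx_*=\tfrac{n-1}{n}\bsone$ to lie in $L$.

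Next, I would exploit the group structure by examining the cyclic subgroup $\langle\bsx_*\rangle\subseteq L$. Its elements are
\[
k\bsx_*\bmod\bsone=\frac{k(n-1)\bmod n}{n}\,\bsone,\qquad k=0,1,\dots,n-1.
\]
Because $\gcd(n-1,n)=1$, the map $k\mapsto k(n-1)\bmod n$ is a bijection on $\{0,1,\dots,n-1\}$, so
\[
\langle\bsx_*\rangle=\Bigl\{\bszero,\tfrac{1}{n}\bsone,\tfrac{2}{n}\bsone,\dots,\tfrac{n-1}{n}\bsone\Bigr\},
\]
which already has $n$ elements. Combined with $|L|=n$ this forces $L=\langle\bsx_*\rangle$, i.e.\ $L$ must be exactly the main-diagonal set.

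Finally, I would verify the converse, namely that the diagonal set does satisfy all three hypotheses: it is a rank one lattice (generated by $\tfrac{1}{n}\bsone$), its projection onto any single coordinate is $\{0,1/n,\dots,(n-1)/n\}$, and for any $\bsz\in[0,1]^d$ its local discrepancy equals $\lceil n\min_j z_j\rceil/n-\prod_j z_j\ge\min_j z_j-\prod_j z_j\ge 0$, so NNLD holds. The argument is essentially immediate once Proposition~\ref{lem:upper_diag_point} is in hand; the only subtle point is the implicit reliance on the group structure, which holds for integration lattices of any rank, so no substantial obstacle arises.
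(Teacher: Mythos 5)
Your argument is correct and follows essentially the same route as the paper: invoke Proposition~\ref{lem:upper_diag_point} to force $\bsx_*=\frac{n-1}{n}\bsone$ into the set, then use the additive group structure to generate the full diagonal $\{\frac{k}{n}\bsone : 0\le k<n\}$ and conclude by counting. Your explicit verification that the diagonal set is itself a projection regular NNLD lattice is a reasonable addition the paper leaves implicit (it notes earlier that $\bsx_i=(i/n)\bsone$ is a trivial NNLD construction).
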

\begin{proof}  
Let $P_n$ be a projection regular lattice point set, consisting of $n$ points in $[0,1)^d$, that has NNLD. Due to Proposition~\ref{lem:upper_diag_point}, $P_n$ has to
contain the point $\bsx_*= \frac{n-1}{n} \bsone$. Due to the additive group structure of $P_n$, we have
$$
k \bsx_* \bmod 1 = \frac{n-k}{n} \bsone \in P_n  \hspace{3ex}\text{for $k=0,1,\ldots, n-1$}.
$$
The set above has $n$ distinct points, so they must be all of $P_n$.
\end{proof}

From Corollary~\ref{cor:pro_reg_lattice_NNLD} we see, in particular, that the only projection regular rank one lattices that are NNLD are trivial, and equivalent to taking all $g_j=1$.
If we also consider lattices that are not projection regular, then we can find constructions that are NNLD and
do not only consist of points on the main diagonal of the unit cube $[0,1)^d$. See Theorem~\ref{thm:non_pro_reg_lattice_NNLD} in Section \ref{sec:rankone}.

Now we look at $(t,m,d)$-nets
\cite{dick:pill:2010,nied92}.
The most widely used $(t,m,d)$-nets are those
of Sobol' in base $b=2$. Sobol' points require one to choose parameters known as direction numbers, with those of \cite{joe:kuo:2008} being especially prominent.
By considering the
point $\bsx_*=\bsone(1-1/n)$, we often find that such Sobol' points
 cannot be NNLD.  The first and third components of $\bsx_i\in[0,1]^d$ for $d\ge3$ 
are projection regular but, for $2\le m\le 20$ they fail to contain $(1-1/n,1-1/n)$.  Therefore the projection of the Sobol' points onto those two dimensions fails to be NNLD and hence the $d$ dimensional point set is not NNLD either.

Like lattice point sets, digital $(t,m,d)$-nets in base $b\ge 2$ have a group structure; this time it is based
on the digitwise addition modulo $b$, which is performed in each component separately.
Using this group structure and Proposition~\ref{lem:upper_diag_point}, we obtain a corollary with a similar flavor
to Corollary~\ref{cor:pro_reg_lattice_NNLD}, although with less dramatic consequences.

\begin{corollary}\label{cor:pro_reg_dig_net_NNLD}
Let $d,m \ge 1$ and $b\ge 2$. Let 
$$
\alpha_{b,m} = \sum_{\nu =1}^m  b^{-\nu} = \frac{1-b^{-m}}{b-1}.
$$
On the one hand, any digital $(t,m,d)$-net in base $b\ge 2$ that is projection regular and has the NNLD property
contains the cyclic subgroup
$$
\{ \bszero, \alpha_{b,m}  \bsone,2 \alpha_{b,m} \bsone, \ldots, (b-1) \alpha_{b,m}\bsone \},
$$
which consists of $b$  points on the main diagonal. 

On the other hand, any $(t,m,d)$-net in base $b\ge 2$ has at most $b^{t+ \lceil \frac{m-t}{d} \rceil}$ points on the main diagonal.
\end{corollary}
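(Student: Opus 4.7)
The plan is to treat the two claims separately. Part 1 combines Proposition~\ref{lem:upper_diag_point} with the digit-wise additive group structure of a digital $(t,m,d)$-net in base $b$, while Part 2 is a counting argument based on how many elementary intervals the main diagonal can meet.

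For Part 1, first rewrite the point supplied by Proposition~\ref{lem:upper_diag_point}. With $n=b^m$,
$$
\frac{n-1}{n}=1-b^{-m}=(b-1)\sum_{\nu=1}^m b^{-\nu}=(b-1)\alpha_{b,m},
$$
so every coordinate of $\bsx_*=(b-1)\alpha_{b,m}\bsone$ has a base-$b$ expansion in which all $m$ digits equal $b-1$. A digital net is closed under coordinate-wise, digit-wise addition modulo $b$, so the cyclic subgroup generated by $\bsx_*$ under that operation is contained in $P_n$. Adding $\bsx_*$ to itself $k$ times digit-wise produces a point all of whose digits equal $k(b-1)\bmod b\equiv(-k)\bmod b$, namely $j\alpha_{b,m}\bsone$ with $j=(-k)\bmod b$. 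As $k$ ranges over $\{0,1,\ldots,b-1\}$, so does $j$, giving exactly the claimed subgroup of $b$ diagonal points.

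For Part 2, set $\ell^{*}=\lceil(m-t)/d\rceil$ and choose non-negative integers $\ell_1,\ldots,\ell_d$ with $\sum_j\ell_j=m-t$ and $\max_j\ell_j=\ell^{*}$; for instance, with $r=(m-t)-d\lfloor(m-t)/d\rfloor$, take $r$ of the $\ell_j$ equal to $\ell^{*}$ and the rest equal to $\ell^{*}-1$. By the defining property of a $(t,m,d)$-net, each elementary interval $\prod_{j=1}^d[a_jb^{-\ell_j},(a_j+1)b^{-\ell_j})$ of this shape contains exactly $b^t$ net points. For any $x\in[0,1)$, the diagonal point $x\bsone$ lies in a unique such elementary interval, namely the one with $a_j=\lfloor xb^{\ell_j}\rfloor$. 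The vector $(a_1,\ldots,a_d)$ changes only when $x$ crosses a multiple of some $b^{-\ell_j}$, and since $\ell_j\le\ell^{*}$ every such break point lies among the $b^{\ell^{*}}$ multiples of $b^{-\ell^{*}}$. Hence the diagonal meets at most $b^{\ell^{*}}$ such elementary intervals, each contributing at most $b^t$ net points, giving at most $b^{t+\lceil(m-t)/d\rceil}$ points of the net on the diagonal.

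The counting is routine once the setup is right; the main conceptual hazard is in Part 1, where $k\bsx_*$ refers to $k$-fold digit-wise addition modulo $b$ rather than ordinary addition modulo $1$, so no carries occur and every digit simply cycles through $(-k)\bmod b$. In Part 2 the only real care needed is the unbalanced split of the $\ell_j$ when $d\nmid m-t$, combined with the observation that every coarser coordinate bin is automatically aligned with the finest bin of width $b^{-\ell^{*}}$, so the diagonal cannot traverse any additional elementary intervals beyond the $b^{\ell^{*}}$ identified above.
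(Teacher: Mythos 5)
Your proof is correct and follows essentially the same route as the paper: Part 1 uses Proposition~\ref{lem:upper_diag_point} plus digit-wise cycling of the all-$(b-1)$ point under the net's group operation, and Part 2 partitions $[0,1)^d$ into elementary intervals with exponents summing to $m-t$ and maximal exponent $\lceil (m-t)/d\rceil$, so that each contains exactly $b^t$ points and at most $b^{\lceil (m-t)/d\rceil}$ of them meet the diagonal. (Only a cosmetic slip: your explicit choice of the $\ell_j$ should read ``all equal to $\ell^*$'' when $r=0$.)
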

\begin{proof}  
Let $n=b^m$, and let $P_n$ be a projection regular digital $(t,m,d)$-net, consisting of $n$ points in $[0,1)^d$, that has NNLD. 
Due to Proposition~\ref{lem:upper_diag_point}, $P_n$ has to
contain the point $\bsx_*= \frac{n-1}{n} \bsone = (b-1)\alpha_{b,m} \bsone$. Using the specific commutative group addition of $P_n$, 
we see that adding up $\bsx_*$ $k$ times yields
$$
k \bsx_*  = (b-k)\alpha_{b,m} \bsone \in P_n$$  
for $k=0,1,\ldots, b-1$.

Now let $P_n$ be an arbitrary $(t,m,d)$-net in base $b$. Put $k:=\lceil \frac{m-t}{d} \rceil$. We may partition the half-open unit cube $[0,1)^d$ into 
$b^{m-t}$ half-open axis-parallel boxes (of the same shape and of volume $b^{t-m}$) with side length $b^{-k}$ and, possibly,  side length  $b^{1-k}$. 
Due to the net property, each of these boxes 
contains exactly $b^t$ points of $P_n$, and at most
$b^k$ of the boxes  have a non-trivial intersection with the main diagonal.
\end{proof}

The next result shows that Cartesian products of
finitely many NNLD (or NPLD) point sets are also NNLD
(respectively NPLD).

\begin{lemma}\label{lem:cartesian}
For positive integers $d_1$, $d_2$, $n_1$ and $n_2$,
let $\bsx_0,\dots,\bsx_{n_1-1}\in[0,1]^{d_1}$
and $\tilde \bsx_0,\dots,\tilde\bsx_{n_2-1}\in[0,1]^{d_2}$.
Let $\bsz_0,\dots,\bsz_{N-1}\in[0,1]^{d_1+d_2}$ for $N=n_1n_2$
be the Cartesian product of those two point
sets.  
If both $\bsx_i$ and $\tilde \bsx_i$ are NNLD points
then $\bsz_0,\dots,\bsz_{N-1}$ are NNLD points.
Analogously, if both $\bsx_i$ and $\tilde \bsx_i$ are NPLD points then
$\bsz_i$ are NPLD points.
\end{lemma}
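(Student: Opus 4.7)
The plan is to exploit the fact that, for axis-aligned anchored boxes, both the Lebesgue measure and the empirical measure of a Cartesian product point set factor as a product of the corresponding quantities for the two factor point sets. Once this factorization is set up, the NNLD and NPLD conclusions will follow from an elementary inequality about products of nonnegative reals.

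Concretely, I would first fix an arbitrary query point $\bsz\in[0,1]^{d_1+d_2}$ and split it as $\bsz=(\bsy,\tilde{\bsy})$ with $\bsy\in[0,1]^{d_1}$ and $\tilde{\bsy}\in[0,1]^{d_2}$. Then I would observe that the box $[\bszero,\bsz)\subset[0,1]^{d_1+d_2}$ is exactly $[\bszero,\bsy)\times[\bszero,\tilde{\bsy})$, so
\begin{align*}
\vol\bigl([\bszero,\bsz)\bigr)&=\vol\bigl([\bszero,\bsy)\bigr)\cdot\vol\bigl([\bszero,\tilde{\bsy})\bigr),\\
\wh\vol\bigl([\bszero,\bsz)\bigr)&=\frac{1}{n_1n_2}\sum_{i=0}^{n_1-1}\sum_{j=0}^{n_2-1}\ind_{\bsx_i\in[\bszero,\bsy)}\ind_{\tilde{\bsx}_j\in[\bszero,\tilde{\bsy})}\\
&=\wh\vol\bigl([\bszero,\bsy);P_{n_1}\bigr)\cdot\wh\vol\bigl([\bszero,\tilde{\bsy});P_{n_2}\bigr).
\end{align*}

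Next I would introduce the shorthand $A_1=\wh\vol([\bszero,\bsy);P_{n_1})$, $B_1=\vol([\bszero,\bsy))$ and analogously $A_2, B_2$ for the second factor. All four quantities lie in $[0,1]$. In the NNLD case, the hypothesis gives $A_1\ge B_1\ge 0$ and $A_2\ge B_2\ge 0$, whence
$$
A_1A_2-B_1B_2=(A_1-B_1)A_2+B_1(A_2-B_2)\ge 0,
$$
which is exactly $\delta(\bsz;P_N)\ge 0$. In the NPLD case the same identity with the reversed inequalities $0\le A_1\le B_1$ and $0\le A_2\le B_2$ gives $A_1A_2\le B_1B_2$, i.e.\ $\delta(\bsz;P_N)\le 0$. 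Since $\bsz$ was arbitrary, this yields the desired conclusions.

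There is no real obstacle here; the only thing worth stating carefully is the factorization of $\wh\vol$ on product boxes, which relies on the indicator of a product set being the product of the indicators. Everything else is just the one-line algebraic identity $A_1A_2-B_1B_2=(A_1-B_1)A_2+B_1(A_2-B_2)$ applied to nonnegative quantities.
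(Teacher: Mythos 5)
Your proof is correct and follows essentially the same route as the paper's: factor both the Lebesgue and empirical measures of the anchored box over the two coordinate blocks, then multiply the two one-factor inequalities between nonnegative quantities. The only cosmetic difference is that you make the product-of-inequalities step explicit via the identity $A_1A_2-B_1B_2=(A_1-B_1)A_2+B_1(A_2-B_2)$, whereas the paper simply multiplies the inequalities directly.
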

\begin{proof}
For any $\bsz\in[0,1]^{d_1+d_2}$ 
define $\bsx=\bsz_{[d_1]}$
and $\tilde \bsx=\bsz_{-[d_1]}$.
Let $\vol_1$, $\vol_2$ and $\vol$ denote
Lebesgue measure on $[0,1]^{d_1}$, $[0,1]^{d_2}$
and $[0,1]^d$ for $d=d_1+d_2$, respectively.
Let $\wh \vol_1$, $\wh \vol_2$ and $\wh\vol$
be empirical measures for $\bsx_i$, $\tilde\bsx_i$
and $\bsz_i$ respectively.
If $\bsx_i$ and $\tilde \bsx_i$ are NNLD then
\begin{align*}
\wh\vol([\bszero_d,\bsz))&=
\wh\vol_1([\bszero_{d_1},\bsx))
\wh\vol_2([\bszero_{d_2},\tilde\bsx))\\
&\ge\vol_1([\bszero_{d_1},\bsx))
\vol_2([\bszero_{d_2},\tilde\bsx))\\
&=\vol([\bszero_d,\bsz)).
\end{align*}
Therefore $\delta(\bsz)\ge0$ and $\bsz_i$ are NNLD.
The same argument, with the inequalities reversed,
applies to the NPLD case.
\end{proof}

\section{Comparison to Koksma-Hlawka bounds}\label{sec:kh}

The Koksma-Hlawka inequality is
\begin{align}\label{eq:koksmahlawka}
|\hat\mu-\mu|\le D_n^* V_{\hk}(f)
\end{align}
where $D_n^*$ denotes again the star discrepancy 
and $V_{\hk}(f)$
is the total variation of $f$ in the sense of
Hardy and Krause.
We can be sure that
$$
\hat\mu-D_n^*V_{\hk}(f)\le \mu \le \hat\mu +D_n^*V_{\hk}(f)
$$
but the endpoints of this interval are in general far
harder to compute than $\mu$ is.
One difficulty is that $V_{\hk}(f)$
is a sum of $2^d-1$ Vitali
variations (see \cite{variation}) that in general are harder to compute
than $\mu$ itself is. However when $\tilde{f}$, defined by $\tilde f(\bsx)=f(\bsone-\bsx)$ for every $\bsx$, is completely monotone
then it is useful to work with
an alternative definition of total
variation $V_{\hk\bszero}$ (see \cite{aist:dick:2015}).
For this definition,
$V_{\hk\bszero}(\tilde f) = V_{\hk}(f)$, and 
$V_{\hk\bszero}(\tilde{f})=\tilde{f}(\bsone)- \tilde{f}(\bszero) = f(\bszero)-f(\bsone)$,
 see~\cite{aist:dick:2015}.

With an expression for total variation we still need a value or a bound for $D_n^*$.
The computation of $D_n^*$ is expensive, but
in some instances it might be worth doing,
and for a given set of points we could
pre-compute $D_n^*$.
It is possible to compute $D_n^*$ exactly
at cost $O(n^{d/2+1})$ for fixed $d$ as $n\to\infty$, see \cite{dobk:epps:mitc:1996}.  The cost to compute $D_n^*$ is
exponential in the dimension $d$.
If $n=d\to\infty$ together then computation of $D_n^*$
is NP-complete, see \cite{ gnew:sriv:winz:2008, gia:kna:wah:wer:2012}. Nevertheless, there are algorithms known that provide either upper and lower bounds for $D_n^*$ in moderate dimension, see
\cite{Thie:2001}, or lower bounds for $D_n^*$ even in high dimensions, see \cite{gnew:wahl:winz:2012}. For these and other facts about
computing $D_n^*$, cf. \cite{doer:gnew:wals:2014}.

Then, if we have computed a value
$\varepsilon \ge D_n^*(P_n)$ we then get an interval
$$
\hat\mu \pm \varepsilon (f(\bszero)-f(\bsone))
$$
that is sure to contain $\mu$, when $f(\bsone-\bsx)$ is completely monotone, whether or not $P_n$ is NNLD.

Many of the published bounds for $D^*_n$ with $d\ge2$ take the form
$$
D^*_n \le A_d\frac{\log(n)^{d-1}}n + O\Bigl( \frac{\log(n)^{d-2}}n\Bigr)
$$
where both $A_d<\infty$ and the implied constant within $O(\cdot)$ depend on the dimension $d$ and the construction for the point sets. As a result, they do not provide 100\% bounds that we can use  in a non-asymptotic bound for the integration error.

One notable exception is the result in \cite{gaba:1967} for Hammersley points. For $d=2$ and $n=2^m$ Hammersley points using a base $2$ construction, that paper shows that
\begin{equation}\label{eq:hammersley_exact}
D_n^* = \frac1n\Bigl[ \frac{m}3+\frac19\Bigl(1- \Bigl(-\frac{1}{2}\Bigr)^m\Bigr)\Bigr].
\end{equation}
We also refer to, e.g., \cite{declerck:1986, faur:krit:2013, lar:pil:2003, nied92} and the references therein
for other explicit discrepancy bounds applicable to special cases of (mostly low-dimensional) $(t,m,d)$-nets.

We can use \peter{\eqref{eq:hammersley_exact}} to compare the width of the Koksma-Hlawka interval from~\eqref{eq:koksmahlawka} to the intervals we computed in Section~\ref{sec:example}.

For a function $f$ with a continuous derivative $\partial^2/\partial x_1\partial x_2$ on $[0,1]^2$
\begin{align*}
V_{\hk}(f) &=
\int_0^1\frac{\partial f(x_1,1)}{\partial x_1}\rd x_1
+\int_0^1\frac{\partial f(1,x_2)}{\partial x_1}\rd x_2
+\int_0^1\int_0^1\frac{\partial^2 f(x_1,x_2)}{\partial x_1\partial x_2}\rd x_1\rd x_2\\
&= 3f(1,1)-2f(0,1)-2f(1,0)+f(0,0),
\end{align*}
where we have replaced absolute values of partial
derivatives by the partial derivatives themselves due
to their nonnegativity.
For our example integrand $V_{\hk}(f)\doteq0.7261234$.

When we use the Koksma-Hlawka inequality~\eqref{eq:koksmahlawka} to
get an interval for $\mu$ based on $n$ Hammersley
points, the interval has width
$2D_n^* V_{\hk}(f)$. The intervals we computed in
Section~\ref{sec:example} used $2n$ function evaluations
to get both upper and lower bounds, and so we compare
the width of our 100\% bounds to $2D_{2n}^*V_{\hk}(f)$.
Figure~\ref{fig:wratio} plots the ratio
$$
\frac{\overline\mu_n-\underline\mu_n}{2D_{2n}^*V_{\hk}}
$$
versus $n$ for this example. At the largest sample size, the Koksma-Hlawka bounds are just over 3.6 times as wide as our proposed bands.

\begin{figure}[t]
\centering
\includegraphics[width=.9\hsize]{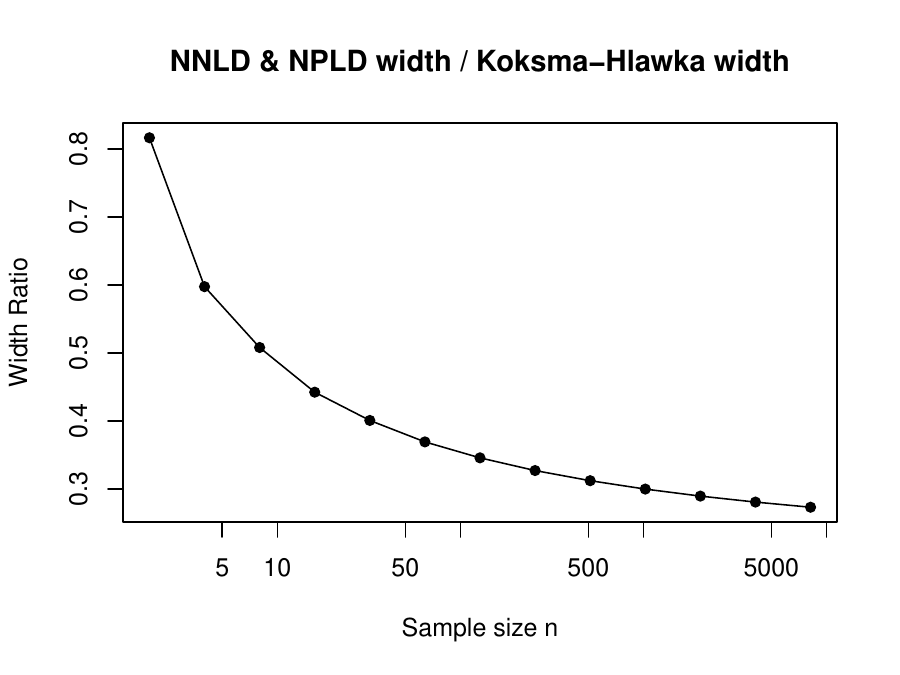}
\caption{\label{fig:wratio}
The horizontal axis has the number $n$ of 
points used in our NNLD and NPLD estimates
for the integrand in Section \ref{sec:example}.
The vertical axis shows the width of a Koksma-Hlawka
interval for $2n$ observations using Hammersley
points divided by the difference between our
upper and lower bounds for $\mu$.}
\end{figure}

\section{Digital net constructions}\label{sec:netconstructions}

The NNLD points of \cite{declerck:1986,gaba:1967} are two dimensional Hammersley points
which are a special kind of digital nets 
\cite{dick:pill:2010} in which the generator matrices
are permutation matrices.  In this section we show that
digital nets constructed with permutation matrices
can be used to get NNLD points with $n=b^m$ points
for any 
integer base $b\ge2$ in any dimension $d\ge1$.
This generalizes the result of \cite{declerck:1986,gaba:1967} which
holds for $d=2$.
We obtain this generalization by a probabilistic
argument using the notion of
associated random variables from reliability theory \cite{esar:pros:walk:1967}.
We also show that there is a limit to how good
digital nets can be when their generator matrices
are permutation matrices.

\subsection{Permutation digital nets}
Here we describe how permutation
digital nets are constructed.
We won't need the more general
definition of digital nets
until we study them more closely in Section~\ref{sec:morenets}.

For a dimension $d\ge1$,  an integer base $b\ge2$
and an integer $m\ge1$ we choose $d$ matrices
$C^{(j)}\in\ints_b^{m\times m}$.
For $n=b^m$ and indices $i=0,1,\dots,n-1$,
write $i=\sum_{k=1}^ma_{i,k}b^{k-1}$ for
$a_{i,k}\in\ints_b$ and put
$\vec{i}=(a_{i,1},\dots,a_{i,m})^\tran$.
Now let $$\vec{x}_{ij}=C^{(j)}\vec{i}\ \bmod b$$
have components $\vec{x}_{ij}(k)\in\ints_b$.
Then $\bsx_i$ has j'th component
$$
x_{ij} = \sum_{k=1}^m\vec{x}_{ij}(k)b^{-k}\in[0,1).
$$
Here we use arithmetic modulo $b$ to define the
digital nets.  It is customary to only use
arithmetic modulo $b$ when $b$ is a prime number
and to use a generalization based on finite fields
when $b=p^r$ for a prime number $p$ and some
power $r\ge2$.  Our proofs of NNLD properties
exploit a monotonicity of integers modulo $b$
whether or not $b$ is a prime.

As an illustration, the 
first $16$ Hammersley points in base $b\ge 2$ 
for $d=2$ are constructed this way with
\begin{align}\label{eq:hammillust}
C^{(1)} = \begin{pmatrix}
1 & 0 & 0 & 0\\
0 & 1 & 0 & 0\\
0 & 0 & 1 & 0\\
0 & 0 & 0 & 1\\
\end{pmatrix}
\quad\text{and}\quad
C^{(2)} = \begin{pmatrix}
0 & 0 & 0 & 1\\
0 & 0 & 1 & 0\\
0 & 1 & 0 & 0\\
1 & 0 & 0 & 0\\
\end{pmatrix}.
\end{align}
Hammersley points for 
$d=2$ and general $m\ge1$ are constructed similarly,
with $C^{(1)}=I_m$ and $C^{(2)}$ a `reversed' identity
matrix as in~\eqref{eq:hammillust}.
The Hammersley points for $d\ge3$ are constructed
using different bases for different components
\cite{hamm:1960}.

The matrices $C^{(j)}$ used to construct digital
nets are called generator matrices.  In the case of the
Hammersley points, we use two generator matrices
and both of them are permutation matrices. 
More generally, digital nets via permutation matrices
are constructed as follows.
For $j\in [d]$, let $\pi_j(\cdot)$ be a permutation of $[m]$. Then define permutation matrices $C^{(j)}$ with elements
$$
C^{(j)}_{rs} = 
\begin{cases}
1, & \pi_j(r)=s\\
0, &\text{else}
\end{cases}
$$
for $r,s\in [m]$.  The $k$'th component of $\vec{x}_{ij}$ is
$$
\vec{x}_{ij}(k)=\sum_{s=1}^m C^{(j)}_{ks}a_{i,s}
= \sum_{s=1}^m \ind_{\pi_j(k)=s}a_{i,s}
= a_{i,\pi_j(k)}.
$$
Therefore
$$
x_{ij} = \sum_{k=1}^mb^{-k}a_{i,\pi_j(k)}.
$$
In the next section we show how to use permutation
generator  matrices to construct NNLD points.

\subsection{Associated random variables}
\label{SEC:ARV}

The settings with $d=1$ or with $n=1$ are trivial 
so we work with $d\ge2$ and $n>1$.
The key ingredient in constructing a short
proof of the NNLD property for our construction
based on a permutation digital net is the
notion of associated random variables \cite{esar:pros:walk:1967}
that originated in reliability theory.

\begin{definition}
Random variables $T_1,\dots,T_m$ are associated if,
for $\bsT=(T_1,\dots,T_m)$ we have
$\cov( g_1(\bsT),g_2(\bsT))\ge0$ for all pairs of functions
$g_1,g_2:\real^m\to\real$ that are nondecreasing in each
argument individually and for which $\e(g_1(\bsT))$,
$\e(g_2(\bsT))$ and  $\e( g_1(\bsT)g_2(\bsT))$ all exist.
\end{definition}

The next theorem uses points that
are a digital net with permutation matrix
generators, followed by shifting every component
of each point to the right by a distance $1/n$.
It shows that they oversample sets of
the form $(\bsz,\bsone]$. Then a reflection
produces NNLD points.

\begin{theorem}\label{thm:perm_implies_nnld}
For integers $m\ge1$, $b\ge2$ and $d\ge2$,
let $\pi_1,\dots,\pi_d$ be permutations of
$\{1,\dots,m\}$, not necessarily distinct.
For $n=b^m$ and $i=0,\dots,n-1$ and $k=1,\dots,m$ define $a_{i,k}\in\ints_b$ via
$i=\sum_{k=1}^m a_{i,k}b^{k-1}$. If $\bsx_i\in(0,1]^d$ has components
\begin{align}\label{eq:shiftedpermnet}
x_{ij} = \frac1n+\sum_{k=1}^mb^{-k} a_{i,\pi_j(k)},\quad j=1,\dots,d
\end{align}
then for any $\bsz\in[0,1]^d$
\begin{equation}\label{eq:perm_implies_nnld}
\frac1n\sum_{i=0}^{n-1}\prod_{j=1}^d\ind\{ x_{ij}>1-z_j\}
\ge \prod_{j=1}^dz_j.
\end{equation}
\end{theorem}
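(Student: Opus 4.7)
The plan is to reinterpret the left-hand side of \eqref{eq:perm_implies_nnld} as an expectation under a uniformly random index. Let $I$ be uniform on $\{0,1,\ldots,n-1\}$ and set $A_k := a_{I,k}$ for $k=1,\ldots,m$. Because $i\mapsto(a_{i,1},\ldots,a_{i,m})$ is a bijection from $\{0,1,\ldots,n-1\}$ onto $\ints_b^m$, the digits $A_1,\ldots,A_m$ are independent and uniformly distributed on $\ints_b$. The $j$-th coordinate $X_j := x_{I,j} = 1/n + \sum_{k=1}^m b^{-k} A_{\pi_j(k)}$ is then a coordinatewise nondecreasing function of $(A_1,\ldots,A_m)$ since $\pi_j$ is a permutation of $[m]$ and the weights $b^{-k}$ are positive. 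The desired inequality \eqref{eq:perm_implies_nnld} rewrites as $\Pr(X_1 > 1-z_1,\ldots,X_d > 1-z_d) \ge \prod_{j=1}^d z_j$.

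Next I would invoke three standard properties of associated random variables from \cite{esar:pros:walk:1967}: (a) independent random variables are associated; (b) coordinatewise nondecreasing functions of associated random variables are associated; and (c) nonnegative bounded associated random variables $T_1,\ldots,T_d$ satisfy $\e[\prod_j T_j] \ge \prod_j \e[T_j]$ (a short induction on $d$ using the pairwise covariance inequality in the definition of association). Applying (a) and (b) in sequence shows that $X_1,\ldots,X_d$ are associated, and a second application of (b) shows that the indicators $Y_j := \ind\{X_j > 1-z_j\}$ are associated, since each $Y_j$ is nondecreasing in the corresponding $X_j$ alone. Property (c) then yields
$$
\Pr(X_1 > 1-z_1,\ldots, X_d > 1-z_d) \;=\; \e\Bigl[\prod_{j=1}^d Y_j\Bigr] \;\ge\; \prod_{j=1}^d \Pr(X_j > 1-z_j).
$$

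What remains is the marginal bound $\Pr(X_j > 1-z_j) \ge z_j$. Because $\pi_j$ is a permutation, $\sum_{k=1}^m b^{-k} A_{\pi_j(k)}$ is uniformly distributed on $\{0, 1/n, \ldots, (n-1)/n\}$, so $X_j$ is uniformly distributed on $\{1/n, 2/n, \ldots, 1\}$. A direct count of integers $k \in \{1,\ldots,n\}$ with $k/n > 1-z_j$ gives $\Pr(X_j > 1-z_j) = \lceil n z_j \rceil/n \ge z_j$; the edge case $z_j = 0$ is trivial because the right-hand side of \eqref{eq:perm_implies_nnld} is then zero. Multiplying the $d$ marginal bounds into the previous display completes the argument.

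I expect the only conceptual obstacle to be the choice of parametrization: one must recognize that the permutation structure lets every $X_j$ be written as a monotone function of the \emph{same} tuple $(A_1,\ldots,A_m)$ of independent digits, so that association propagates simultaneously to all coordinates. Once this observation is in place, the proof reduces to invoking the Esary--Proschan--Walkup toolkit and performing a one-line discrete calculation for the marginals.
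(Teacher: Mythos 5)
Your proposal is correct and follows essentially the same route as the paper: a uniformly random index makes the digits independent (hence associated), each coordinate is a nondecreasing function of the same digit vector so the coordinates (and then the indicators) are associated, the product-of-indicators inequality follows by the same pairwise-covariance induction, and the marginal bound comes from the uniform distribution of $x_{ij}$ on $\{1/n,\dots,1\}$. The only cosmetic difference is that you package the induction as a standalone product inequality for nonnegative associated variables, whereas the paper runs it inline on the indicator functions.
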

\begin{proof}
We define a random index $i\sim \dunif\{0,1,\dots,n-1\}$
which then implies that for each
index $j$ the digits $a_{i,\pi_j(k)}\sim\dunif(\ints_b)$
independently for $k=1,\dots,m$.
For each $j=1,\dots,d$ we have $x_{ij}\sim\dunif\{
1/n,2/n,\dots,1\}$.  Therefore for any $z_j\in[0,1]$,
$\Pr( x_{ij}>1-z_j)\ge z_j$.

Let $T_j$ be the value of
the random variable $x_{ij}$ where
$i$ is random and $j$ is not.
Letting $\gamma_j$ be the inverse of the permutation
$\pi_j$, we may write
$$T_j=x_{ij}=\frac1n+\sum_{k=1}^mb^{-\gamma_j(k)} a_{i,k}.
$$
Independent random variables $a_{i,k}$ are associated
by Theorem 2.1 of \cite{esar:pros:walk:1967}.
Then $T_1,\dots,T_d$ are associated by 
result (P4) of \cite{esar:pros:walk:1967} because they are 
nondecreasing functions of $a_{i,1},\dots,a_{i,m}$.

For $d=2$, let $g_1(\bsT)=\ind\{x_{i1}>1-z_1\}$
and $g_2(\bsT)=\ind\{x_{i2}>1-z_2\}$.
These are nondecreasing functions of associated random variables
and so by the definition of associated random variables
$$
\Pr( x_{i1}>1-z_1, x_{i2}>1-z_2) \ge \Pr(x_{i1}>1-z_1)\Pr(x_{i2}>1-z_2).
$$
Next, for $2< r\le d$ let
$g_1(\bsT)= \prod_{j=1}^{r-1}\ind\{x_{ij}>1-z_j\}$
and $g_2(\bsT)=\ind\{x_{ir}>1-z_r\}$.
Using induction we conclude that with our random
$i$,
$$
\Pr( x_{ij}>1-z_j,\ j=1,\dots,d)\ge\prod_{j=1}^d\Pr(x_{ij}>1-z_j)
\ge\prod_{j=1}^dz_j
$$
which is equivalent to~\eqref{eq:perm_implies_nnld}.
\end{proof}

\begin{corollary}
For integer $b\ge2$ and dimension $d\ge2$
let $\tilde\bsx_0,\dots,\tilde\bsx_{n-1}\in[0,1]^d$ be points of a
digital net constructed in base $b$
using permutation matrices as generators.
Then the points $\bsx_0,\dots,\bsx_{n-1}\in[0,1]^d$
with $x_{ij} = 1-(1/n+\tilde x_{ij})$ are NNLD.
\end{corollary}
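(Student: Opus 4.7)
The plan is to simply translate Theorem~\ref{thm:perm_implies_nnld} by a change of variables. Given the permutation digital net $\tilde\bsx_0,\dots,\tilde\bsx_{n-1}$, its components are exactly
$$
\tilde x_{ij} = \sum_{k=1}^m b^{-k} a_{i,\pi_j(k)},
$$
so if I define $y_{ij} = 1/n + \tilde x_{ij}$, then $\bsy_i$ is precisely the shifted permutation net from~\eqref{eq:shiftedpermnet} and Theorem~\ref{thm:perm_implies_nnld} applies: for every $\bsz\in[0,1]^d$,
$$
\frac1n\sum_{i=0}^{n-1}\prod_{j=1}^d\ind\{y_{ij}>1-z_j\}\ge \prod_{j=1}^d z_j.
$$

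The next step is to observe that $x_{ij}=1-y_{ij}$, so the event $\{y_{ij}>1-z_j\}$ is identical to $\{x_{ij}<z_j\}$. Making this substitution coordinatewise inside the product converts the left-hand side into
$$
\frac1n\sum_{i=0}^{n-1}\prod_{j=1}^d\ind\{x_{ij}<z_j\}=\wh\vol([\bszero,\bsz)),
$$
while the right-hand side is $\prod_{j=1}^d z_j=\vol([\bszero,\bsz))$. Hence $\delta(\bsz;P_n)=\wh\vol([\bszero,\bsz))-\vol([\bszero,\bsz))\ge 0$ for all $\bsz\in[0,1]^d$, which is the definition of NNLD.

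There is no real obstacle here; the only thing to check carefully is the algebraic identification $\{y_{ij}>1-z_j\}=\{x_{ij}<z_j\}$ and that the resulting empirical count matches $\wh\vol([\bszero,\bsz))$ (open box, strict inequality), which it does by construction. One might also note, for completeness, that $x_{ij}\in[0,1-1/n]\subseteq[0,1]$, so the reflected points lie in the unit cube as required; this is consistent with the general observation in Section~\ref{sec:nnldproperties} that any NNLD point set must sit inside $[0,1-1/n]^d$.
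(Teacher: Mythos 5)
Your proposal is correct and follows exactly the paper's own argument: identify the shifted points $1/n+\tilde x_{ij}$ with the point set of Theorem~\ref{thm:perm_implies_nnld}, rewrite the event $\{x_{ij}<z_j\}$ as $\{\tilde x_{ij}+1/n>1-z_j\}$, and read off $\wh\vol([\bszero,\bsz))\ge\prod_{j=1}^d z_j$. No changes needed.
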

\begin{proof}
Pick $\bsz\in[0,1]^d$.
Now $\ind\{x_{ij}<z_j\}=\ind\{\tilde x_{ij} + 1/n>1-z_j\}$ and so
\begin{align*}
\wh\vol([\bszero,\bsz))=\frac{1}{n}\sum_{i=0}^{n-1}\prod_{j=1}^d\ind\{x_{ij} <z_j\}
=\frac{1}{n} \sum_{i=0}^{n-1}\prod_{j=1}^d\ind\{\tilde x_{ij} +1/n>1-z_j\}
\ge \prod_{j=1}^dz_j
\end{align*}
by Theorem~\ref{thm:perm_implies_nnld}.
\end{proof}

For $d=2$ it was possible to turn an NNLD point set
into an NPLD point set in~\eqref{eq:npldpts} which
includes a reflection $x_{i,2}=1-\tilde x_{i,2}$.
If we were to reflect two or more components of 
an NNLD point set, then those components would
take on a positive upper orthant dependence, which does not 
generally provide the negative
lower orthant dependence we want for NPLD points. For projection regular
NNLD points the reflection of $s\ge2$ components will contain $\bsone_s/n$
and there will be a box $B=[\bszero_s,\bsone_s(1/n+\epsilon))$ with $\delta(B)=1/n-(1/n +\epsilon)^s >0$ for small enough $\epsilon>0$ \art{thus failing to provide NPLD points.}

\subsection{Quality of permutation digital nets}\label{sec:morenets}

It is clear on elementary grounds that a permutation
digital net with two identical permutations among
$\pi_1,\dots,\pi_d$ would have a very high discrepancy.  The resulting
points would satisfy $x_{ij}=x_{ij'}$ for $0\le i<n$ and 
some $1\le j<j'\le d$.
Here we show that our restriction to permutation
digital nets rules out the best
digital nets when $d\ge3$.  We begin with the definitions
of these nets.

\begin{definition}
For integers $d\ge1$, $b\ge2$, and vectors
$\bsk,\bsa\in\natu^d$ with $a_j\in\ints_{b^{k_j}}$
for $j=1,\dots,d$
the Cartesian product
$$
\mathcal{E}(\bsk,\bsa)=\prod_{j=1}^d\Bigl[ \frac{a_j}{b^{k_j}},\frac{a_j+1}{b^{k_j}}
\Bigr)
$$
is an elementary interval in base $b$.
\end{definition}

\begin{definition}
For integers $b\ge2$, $d\ge1$ and $0\le t\le m$, the $n$
points $\bsx_0,\dots,\bsx_{n-1}$ are a $(t,m,d)$-net in
base $b$ if
$$
\wh\vol(\mathcal{E}(\bsk,\bsa))=\vol(\mathcal{E}(\bsk,\bsa))
$$
holds for all elementary intervals in base $b$
for which $\sum_{j=1}^dk_j\le m-t$.
\end{definition}

Digital nets are $(t,m,d)$-nets.  Other parameters being
equal, smaller values of $t$ denote better equidistribution
of the points $\bsx_i$ which translates into a 
lower bound on $D_n^*$ and hence a smaller upper
bound in the Koksma-Hlawka inequality.
From 
Theorem 4.10 of \cite{nied92} 
\begin{align}\label{eq:niedrate}
D_n^* = O\Bigl( \frac{b^t\log(n)^{d-1}}n\Bigr)
+O\Bigl( \frac{b^t\log(n)^{d-2}}n\Bigr)
\end{align}
where the implied constants depend only on $d$
and $b$. The powers of $\log(n)$ are not negligible
but they are often not actually observed in numerical experiments \cite{wherearethelogs}.

The quality parameter of a permutation digital net
can be very bad.  For $d=2$, taking the Hammersley
construction yields $t=0$ which is the best possible
value.  Here we show that for $d\ge3$, the best
available values of $t$ are far from optimal.

The following definition and result are
based on \cite[Sect.~2.3]{mato:1998}.
While digital net constructions are available
based on any finite field, our NNLD constructions
using permutation matrices as generators
require them to be based on modular arithmetic.

\begin{construction}[Digital Construction of $(t,m,d)$-Nets]\label{Cons_6}
For a prime number $b\ge 2$, and $C^{(1)},\dots,C^{(d)} \in (\zb)^{m\times m}$, let $\mathcal{C} = \{ C^{(1)}, \ldots, C^{(d)} \}$.
For $h\in \zb^m$ define $p(h) \in [0,1)^d$ componentwise by its $b$-adic digit expansion
\begin{equation*}
p(h)_j = 
\delta^{(j)}_1(h) b^{-1} + \delta^{(j)}_2 (h) b^{-2} + \cdots + \delta^{(j)}_m(h) b^{-m}
\in [0,1), \hspace{3ex} j=1,\ldots,d,
\end{equation*}
where $\delta^{(j)}(h) = (\delta^{(j)}_1(h), \ldots, \delta^{(j)}_m(h) )$ is simply the vector 
$C^{(j)}h\in \zb^m$ under arithmetic modulo $b$.
We define the point set 
\begin{align}\label{eq:pointset}
P(\mathcal{C}) = (p(h))_{h\in \zb^m}.
\end{align}
Clearly, $|P(\mathcal{C})| = b^m$.

To assess the quality of $P(\mathcal{C})$, we define the \emph{quality criterion $\rho(\mathcal{C})$}: 
For $\bsm=(m_1,m_2, \ldots, m_d)\in \{0,1,\ldots,m\}^d$ 
with $|\bsm|=\sum_{j=1}^dm_j$ let  
$$
\mathcal{C}^{(\bsm)}
= \begin{pmatrix}
C^{(1)}(1{:}m_1, \cdot )\\
C^{(2)}(1{:}m_2,\cdot)\\
\vdots\\
C^{(d)}(1{:}m_d,\cdot)
\end{pmatrix}\in \zb^{|\bsm|\times m}
$$
where $C^{(j)}(1{:}m_j,\cdot)\in\zb^{m_j\times \art{m}}$
represents the
first $m_j$ rows of $C^{(j)}$.
Now $\rho(\mathcal{C})$ is the maximum number $\rho \in \{0,1,\ldots,m\}$ such that for all
$\bsm\in \{0,1,\ldots,m\}^d$ with $|\bsm| = \rho$
we have $\rank (\mathcal{C}^{(\bsm)}) = \rho$
in arithmetic modulo $b$.
\end{construction}

\begin{proposition}\label{Prop_7}
Let $b,m,\mathcal{C}$, and $P(\mathcal{C})$ be as in Construction~\ref{Cons_6}.
Then $P(\mathcal{C})$ is a $(t,m,d)$-net for $t= m - \rho(\mathcal{C})$.
\end{proposition}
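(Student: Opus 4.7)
The plan is to translate the defining condition for membership in an elementary interval into a linear system over $\zb$, and then use the rank hypothesis built into $\rho(\mathcal{C})$ to count solutions exactly.

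First I would fix an elementary interval $\mathcal{E}(\bsk,\bsa)$ with $|\bsk|\leq m-t = \rho(\mathcal{C})$ and observe that $p(h)\in \mathcal{E}(\bsk,\bsa)$ iff, for every $j\in [d]$, the first $k_j$ base-$b$ digits of $p(h)_j$ coincide with the base-$b$ digits of $a_j/b^{k_j}$. By the definition of $\delta^{(j)}(h)=C^{(j)}h\bmod b$, this is equivalent to
$$
C^{(j)}(1{:}k_j,\cdot)\, h \equiv \bsy^{(j)} \pmod{b} \qquad \text{for each } j=1,\ldots,d,
$$
where $\bsy^{(j)}\in\zb^{k_j}$ is the digit vector of $a_j$. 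Stacking these row-blocks, membership in $\mathcal{E}(\bsk,\bsa)$ is equivalent to the single linear system $\mathcal{C}^{(\bsk)} h \equiv \bsy \pmod{b}$, where $\bsy\in\zb^{|\bsk|}$ is determined by $\bsa$.

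Next I would establish the rank claim: for every $\bsk\in\{0,1,\ldots,m\}^d$ with $|\bsk|\le \rho(\mathcal{C})$, the matrix $\mathcal{C}^{(\bsk)}\in\zb^{|\bsk|\times m}$ has rank $|\bsk|$. The definition of $\rho(\mathcal{C})$ gives this directly when $|\bsk|=\rho(\mathcal{C})$. For the case $|\bsk|<\rho(\mathcal{C})$, I would pad $\bsk$ componentwise up to some $\bsm\ge \bsk$ with $|\bsm|=\rho(\mathcal{C})$ and each $m_j\le m$. This is possible because $\rho(\mathcal{C})\le m\le md$ guarantees enough slack in the coordinates. Then $\mathcal{C}^{(\bsk)}$ is a row-submatrix of $\mathcal{C}^{(\bsm)}$, and the latter has rank $\rho(\mathcal{C})=|\bsm|$ by hypothesis, forcing all its rows to be linearly independent; in particular, the rows making up $\mathcal{C}^{(\bsk)}$ are independent, giving $\rank(\mathcal{C}^{(\bsk)})=|\bsk|$.

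Now I would combine the two steps. Since $b$ is prime, $\zb$ is a field, so the linear system $\mathcal{C}^{(\bsk)} h \equiv \bsy \pmod b$ with a full-row-rank coefficient matrix is consistent for every right-hand side $\bsy$ and its solution set is an affine subspace of dimension $m-|\bsk|$, containing exactly $b^{m-|\bsk|}$ elements of $\zb^m$. Therefore
$$
\widehat{\vol}\bigl(\mathcal{E}(\bsk,\bsa)\bigr)
= \frac{b^{m-|\bsk|}}{b^m} = b^{-|\bsk|} = \prod_{j=1}^d b^{-k_j} = \vol\bigl(\mathcal{E}(\bsk,\bsa)\bigr),
$$
which is exactly the $(t,m,d)$-net condition for $t=m-\rho(\mathcal{C})$.

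The main obstacle I anticipate is checking the padding argument in the rank step, since the definition of $\rho(\mathcal{C})$ only asserts full rank for vectors $\bsm$ with $|\bsm|=\rho$ (not $|\bsm|\le\rho$), so one must be careful that enough componentwise slack is available to extend a given $\bsk$ to such an $\bsm$; once that bookkeeping is done the remaining counting argument is routine.
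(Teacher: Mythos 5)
Your proof is correct and is precisely the standard argument that the paper itself omits, deferring to Section~2.3 of the cited work of Matou\v{s}ek: translate membership of $p(h)$ in an elementary interval into the linear system $\mathcal{C}^{(\bsk)}h\equiv\bsy \pmod b$, use full row rank (obtained by padding $\bsk$ up to some $\bsm$ with $|\bsm|=\rho(\mathcal{C})$, which is always possible since $\rho(\mathcal{C})\le m$) to count exactly $b^{m-|\bsk|}$ solutions over the field $\zb$, and conclude $\widehat{\vol}=\vol$. The padding step you flag as the main obstacle does go through, so there is no gap.
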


\begin{observation}\label{Observaion_2}
The proposition shows that the best possible $t$-value $t(\mathcal{C})$ of $P(\mathcal{C})$ is at most
$m - \rho(\mathcal{C})$.  But similar arguments as in the corresponding proof of \cite[Proposition~2.7]{mato:1998}
show that actually 
$$
t(\mathcal{C}) = m - \rho(\mathcal{C}).
$$    
\end{observation}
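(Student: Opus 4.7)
The plan is to establish the reverse inequality $t(\mathcal{C}) \geq m - \rho(\mathcal{C})$; combined with Proposition~\ref{Prop_7} this yields the claimed equality. Write $\rho := \rho(\mathcal{C})$. The case $\rho = m$ is trivial since $t(\mathcal{C}) \geq 0$ always, so assume $\rho < m$. The strategy is to exhibit a single elementary interval of volume $b^{-(\rho+1)}$ whose count of points from $P(\mathcal{C})$ is not $b^{m-\rho-1}$, thereby ruling out the $(t,m,d)$-net property for $t = m - \rho - 1$ and forcing $t(\mathcal{C}) \geq m - \rho$.

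By the maximality built into the definition of $\rho(\mathcal{C})$, there exists $\bsm \in \{0,1,\ldots,m\}^d$ with $|\bsm| = \rho + 1$ (note $|\bsm| \leq m$ since $\rho < m$) such that $\rank(\mathcal{C}^{(\bsm)}) < \rho + 1$ in arithmetic modulo $b$. The containment $p(h) \in \mathcal{E}(\bsm, \bsa)$ is equivalent to requiring, for each $j$, that the first $m_j$ base-$b$ digits of $p(h)_j$ agree with those of $a_j/b^{m_j}$. Stacking these $|\bsm|=\rho+1$ digit equations gives a linear system in $h$ over $\zb$ of the form $\mathcal{C}^{(\bsm)} h \equiv \boldsymbol{v}(\bsa) \pmod{b}$, where $\bsa \mapsto \boldsymbol{v}(\bsa)$ is a bijection between admissible vectors (those with $a_j \in \ints_{b^{m_j}}$) and $\zb^{\rho+1}$. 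Because $\rank(\mathcal{C}^{(\bsm)}) < \rho + 1$, the image of $h \mapsto \mathcal{C}^{(\bsm)} h \bmod b$ is a proper subspace of $\zb^{\rho+1}$; pick $\boldsymbol{v}$ outside this image and let $\bsa$ be the corresponding admissible vector. Then $\mathcal{E}(\bsm, \bsa) \cap P(\mathcal{C}) = \varnothing$, so $\wh\vol(\mathcal{E}(\bsm, \bsa)) = 0 \neq b^{-(\rho+1)} = \vol(\mathcal{E}(\bsm, \bsa))$.

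The only step requiring care is the translation between containment in $\mathcal{E}(\bsm, \bsa)$ and the linear system in $h$: one must check that the coefficient matrix is exactly $\mathcal{C}^{(\bsm)}$ as built row-by-row from the first $m_j$ rows of each $C^{(j)}$, and that every right-hand side in $\zb^{\rho+1}$ is indeed realized by some valid $\bsa$. Once this identification is made, the argument reduces to the elementary observation that a non-surjective linear map over $\zb$ must miss some right-hand side, and missing just one is enough to break the net property. This mirrors the reasoning in \cite[Proposition~2.7]{mato:1998}, merely transposed from the finite-field setting to the modular arithmetic setting used here; no deeper machinery is needed.
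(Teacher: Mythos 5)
Your argument is correct and is precisely the standard argument that the paper defers to via the citation of Matou\v{s}ek: by maximality of $\rho(\mathcal{C})$ there is some $\bsm$ with $|\bsm|=\rho(\mathcal{C})+1$ and rank-deficient $\mathcal{C}^{(\bsm)}$ over the field $\zb$ ($b$ prime by Construction~\ref{Cons_6}), so some admissible right-hand side is missed and the corresponding elementary interval of volume $b^{-(\rho(\mathcal{C})+1)}$ is empty, ruling out $t=m-\rho(\mathcal{C})-1$ and hence every smaller $t$. Combined with Proposition~\ref{Prop_7} this gives the claimed equality; no issues.
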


\begin{proposition}\label{Prp:V_t_value}
Let $V:= \{v_1, \ldots, v_m \}$ be a set of linearly independent vectors in $\zb^m$.
Let $m= \ell d + r$, where $\ell\in \N_0$ and $0 \le r < d$.
If the rows  $C_k^{(j)}$, $k=1, \ldots,m$, of the matrices $C^{(j)}$, $j=1,\ldots,d$, are all
contained in $V$, then $\rho(\mathcal{C}) \le 2 \lfloor m/d \rfloor + 1$.
Therefore, the smallest $t$-value $t(\mathcal{C})$ of $P(\mathcal{C})$ satisfies
\begin{equation*}
 t(\mathcal{C}) \ge (d-2) \lfloor  m/d \rfloor + r - 1.   
\end{equation*}
\end{proposition}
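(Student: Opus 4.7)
The plan is to argue by contradiction, exploiting the dichotomy that for vectors drawn from a linearly independent set $V$, ``linear independence'' and ``distinctness'' coincide. Suppose, toward contradiction, that $\rho(\mathcal{C}) \ge 2\ell + 2$, where $\ell = \lfloor m/d \rfloor$.

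The first step is to convert the rank hypothesis into a disjointness statement for pairs of matrices. Pick any two distinct indices $j \ne j'$ and form the multi-index $\bsm$ with $m_j = m_{j'} = \ell + 1$ and all other entries zero. Then $|\bsm| = 2\ell + 2 \le \rho(\mathcal{C})$, so by definition of $\rho(\mathcal{C})$ the matrix $\mathcal{C}^{(\bsm)}$ has rank $2\ell + 2$. Its rows are the first $\ell + 1$ rows of $C^{(j)}$ together with the first $\ell + 1$ rows of $C^{(j')}$, all of which lie in $V$. Since every subset of a linearly independent set is itself linearly independent iff it consists of distinct vectors, the $2\ell+2$ rows must be pairwise distinct. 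In particular, the set $R_j := \{C^{(j)}(k, \cdot) : 1 \le k \le \ell + 1\}$ is disjoint from $R_{j'}$ for every $j \ne j'$.

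The second step is a pigeonhole count. Since the sets $R_1, \ldots, R_d$ are pairwise disjoint subsets of $V$, their union has cardinality $d(\ell + 1)$. But $m = d\ell + r$ with $0 \le r < d$, so
\begin{equation*}
d(\ell + 1) \;=\; d\ell + d \;=\; m + (d - r) \;\ge\; m + 1 \;>\; m \;=\; |V|,
\end{equation*}
contradicting $R_1 \cup \cdots \cup R_d \subseteq V$. Hence $\rho(\mathcal{C}) \le 2\ell + 1 = 2\lfloor m/d \rfloor + 1$.

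The bound on $t(\mathcal{C})$ then follows immediately from Observation~\ref{Observaion_2}: $t(\mathcal{C}) = m - \rho(\mathcal{C}) \ge m - (2\ell + 1) = (d\ell + r) - 2\ell - 1 = (d-2)\lfloor m/d \rfloor + r - 1$. There is no real obstacle here; the only conceptual point is the equivalence of linear independence and distinctness inside $V$, which is what forces the disjointness that feeds the pigeonhole count.
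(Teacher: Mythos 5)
Your proof is correct and rests on the same two ingredients as the paper's own argument: for rows drawn from the linearly independent set $V$, full rank of $\mathcal{C}^{(\bsm)}$ is equivalent to pairwise distinctness of its rows, and $V$ has only $m < d(\ell+1)$ elements. The paper organizes this as a direct two-case analysis of $m$ explicitly listed rows, whereas you run the same pigeonhole count in contrapositive form via the disjoint sets $R_1,\dots,R_d$; the mechanism and the resulting bound are identical.
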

\begin{proof}
Consider the $m$ row vectors 
$$C^{(1)}_1, C^{(2)}_1, \ldots, C^{(d)}_1, \quad C^{(1)}_2, C^{(2)}_2, \ldots,C^{(d)}_2,\quad\ldots\quad, C^{(1)}_{\ell+1},C^{(2)}_{\ell+1},\ldots,C^{(r)}_{\ell +1}.$$
where $C_i^{(j)}$ is row $i$ of $C^{(j)}$.

\emph{Case 1}: Two of these row vectors are equal. Assume these rows are $C^{(j)}_k$ and $C^{(j')}_{k'}$.
If $j=j'$, then we consider the matrix $C:= \mathcal{C}^{(\bsm)}$ with $m_j = \max\{k, k'\}$
and $m_\nu = 0$ for all $\nu \neq j$. Obviously, $\rank(C) \le \max\{k, k'\} - 1$. Hence it follows
that $\rho(\mathcal{C}) \le \max\{k, k'\} - 1 \le \lceil m/d \rceil - 1$.
If $j\neq j'$, then we consider the matrix $C:= \mathcal{C}^{(\bsm)}$ with $m_j = k$, $m_{j'} = k'$,
and $m_\nu = 0$ for all $\nu \notin \{j,j'\}$. Obviously, $\rank(C) \le k +  k' - 1$. Hence it follows
 that $\rho(\mathcal{C}) \le k + k' - 1 \le 2\lceil m/d \rceil - 1$.

\emph{Case 2}: All of these row vectors are different. 
Consider $C^{(d)}_{\ell+1}$. Then there exist $1\le j <  d$ and $1\le h \le \ell+1$ or
$j = d$ and $1\le h \le \ell$
such that $C^{(d)}_{\ell+1} = C^{(j)}_h$.

Now we argue similarly as in case 1:
If $j=d$, then it is easy to see that $\rho(\mathcal{C}) \le \ell= \lfloor m/d \rfloor$.
If $j\neq d$, then $\rho(\mathcal{C}) \le h+ \ell \le 2\ell + 1 \le 2  \lfloor m/d \rfloor+1$.

In any case, we have shown that $\rho(\mathcal{C}) \le 2  \lfloor m/d \rfloor+1$.
\end{proof}

\begin{corollary}\label{cor:tlowerbound}
Let $m= \ell d + r$, where $\ell\in \N$ and $0 \le r < d$.
If $C^{(1)}, \ldots, C^{(d)} \in \zb^{m\times m}$ are all permutation matrices, then 
the smallest $t$-value $t(\mathcal{C})$ of $P(\mathcal{C})$ satisfies
\begin{equation*}
 t(\mathcal{C}) \ge (d-2) \lfloor  m/d \rfloor + r - 1.   
\end{equation*}
\end{corollary}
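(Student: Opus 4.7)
The plan is to observe that Corollary~\ref{cor:tlowerbound} is essentially an immediate specialization of Proposition~\ref{Prp:V_t_value}, so the work is to identify the right set $V$ of linearly independent vectors and verify its hypothesis.

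First, I would take $V=\{e_1,e_2,\dots,e_m\}$ to be the standard basis of $\zb^m$, where $e_k$ denotes the vector whose $k$'th coordinate is $1$ and whose other coordinates are $0$. These $m$ vectors are linearly independent over $\zb$ (in fact over any base ring), so they satisfy the hypothesis imposed on $V$ in Proposition~\ref{Prp:V_t_value}. Second, I would recall that a permutation matrix is by definition a $0/1$ matrix whose rows are a permutation of $e_1,\dots,e_m$. Consequently, for each $j=1,\dots,d$ and each $k=1,\dots,m$, the row $C^{(j)}_k$ coincides with some $e_{\pi_j(k)}\in V$. Thus every row of every $C^{(j)}$ lies in $V$.

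With both hypotheses of Proposition~\ref{Prp:V_t_value} verified, I would simply cite that proposition to conclude that
\[
 t(\mathcal{C})\ge(d-2)\lfloor m/d\rfloor+r-1,
\]
which is exactly the claim. Since Proposition~\ref{Prp:V_t_value} was already stated with the same writing $m=\ell d+r$, $0\le r<d$, no rearrangement is needed.

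The only "obstacle'' is purely expository: making clear why permutation matrices are the natural example of matrices whose rows are drawn from a fixed linearly independent set. I would emphasize that even though the $d$ permutations $\pi_1,\dots,\pi_d$ may be arbitrary, the reservoir from which all rows are drawn is the single set $V$ of $m$ basis vectors, so the pigeonhole / rank argument inside the proof of Proposition~\ref{Prp:V_t_value} is guaranteed to apply. This makes the corollary essentially a one-line consequence of the proposition.
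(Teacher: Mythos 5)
Your proof is correct and matches the paper's own argument exactly: the paper likewise deduces the corollary from Proposition~\ref{Prp:V_t_value} by taking $V=\{e_1,\dots,e_m\}$, the standard unit vectors of $\zb^m$, and noting that every row of a permutation matrix lies in this linearly independent set. Nothing is missing.
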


\begin{proof}
This follows directly from Proposition~\ref{Prp:V_t_value}, since the rows of the 
matrices $C^{(1)}, \ldots, C^{(d)}$ are all in $\{ e_1, \ldots, e_m\}$, where $e_i$ denotes the $i$-th standard unit vector of $\zb^m$.
\end{proof}

Let us represent the permutation matrix where row $k$ has
a one in column $\pi(k)$ as simply the column
vector with entries $\pi(k)$. 
Then we can represent our permutation nets with an
$m\times d$ matrix $\Pi$ with $j$'th column $\pi_j$.  For
example the Hammersley points 
with generator matrices $I_m$ and reversed $I_m$
are represented this way by
\begin{align}\label{eq:hammerpi}
\Pi=\begin{pmatrix}
1 & m\\
2 & m-1\\
\vdots & \vdots\\
m & 1
\end{pmatrix}.
\end{align}
For $d=3$ we want $\Pi\in\{1,\dots,m\}^{m\times 3}$
with the largest possible value of
$$
\rho=\min\bigl\{ k+k' \mid \Pi_{k,j}=\Pi_{k',j'}, 1\le j<j'\le 3\bigr\}-1.
$$
Then we get quality parameter $t=m-\rho$.
If we simply adjoin a third column to $\Pi$ in~\eqref{eq:hammerpi} the best $\rho$
we can get is $m/2$ if $m$ is even
and $(m+1)/2$ if $m$ is odd. These lead
to $t\ge m/2$ if  $m$ is  even  and $t\ge (m-1)/2$ if $m$  is odd, which is much worse
than the bound in Corollary~\ref{cor:tlowerbound}.
For $t=m/2$ the first term
in~\eqref{eq:niedrate} is 
$O(b^{m/2}\log(n)^2/n)=O(\log(n)^2/\sqrt{n})$
because $b=n^{1/m}$.

If $m=3\ell$, then we can choose the first $\ell$ rows of
$\Pi$ to be
$$
\begin{pmatrix}
1 & 2 & 3\\
4 & 5 & 6\\
\vdots & \vdots &\vdots\\
3\ell-2 &3\ell-1 & 3\ell\\
\end{pmatrix}.
$$
Let us label these first $\ell$ rows of $\Pi$
by $\bsr_1,\bsr_2,\dots,\bsr_\ell\in\natu^3$. 
Now, for $\bsr = (a,b,c)$ let
$\bsr'=(b,c,a)$ and $\bsr''=(c,a,b)$ be one and
two rotations of the elements of $\bsr$ to the
left with wraparound.
By taking the rows of $\Pi$ in this order
$$
\bsr_1,\bsr_2,\dots,\bsr_\ell,
\enspace \bsr_{\ell}',\bsr_{\ell-1}',\dots,\bsr_1',
\enspace \bsr_{\ell}'',\bsr_{\ell-1}'',\dots,\bsr_1''
$$
we get $\rho = 2\ell$ and hence $t=m/3$.
This is very close to the bound $\lfloor m/d\rfloor+0-1=m/3-1$ from Corollary~\ref{cor:tlowerbound}.
We prefer the ordering
$$
\bsr_1,\bsr_2,\dots,\bsr_\ell,\enspace 
\bsr'_\ell,\bsr''_\ell,\enspace 
\bsr_{\ell-1}',\bsr_{\ell-1}'',\enspace
\bsr'_{\ell-2},\bsr''_{\ell-2},\enspace 
\enspace \dots\enspace
\bsr_2',\bsr_2'',\enspace \bsr_1', \bsr_1''
$$
because while it attains the same value of $t$ it has fewer pairs of columns for which $k+k'=2\ell+1$.
With $t=m/3$ for $d=3$ the first term
in~\eqref{eq:niedrate} is
$O(b^t\log(n)^2/n)=O(n^{-2/3}\log(n)^2)$.

Using the same method for $d=4$ and $m=4\ell$
we can get $\rho=2\ell =m/2$, implying that $t=m/2$, and yielding a
rate of $O(b^t\log(n)^3/n)=O(n^{-1/2}\log(n)^3)$.
This result for $d=4$ matches the rate
for plain MC apart from the power of $\log(n)$.
So the 100\% error bounds available from NNLD sampling
come with a logarithmic accuracy penalty
in comparison to plain MC.

A second choice for $d=4$ is to use a Cartesian
product of two Hammersley point sets with $\sqrt{n}$
points each.
The error of such a Cartesian product would
ordinarily be the same as that of the individual
Hammersley rules in two dimensions with their
reduced sample sizes.  That is $O( n^{-1/2}\log(n))$
which is then a better logarithmic factor
than the $4$ dimensional permutation nets attain.

For $d=3$ we could also use a Cartesian product of
Hammersley points with $n=b^2$ points and a one
dimensional grid $\{0,1/n,\dots,1-1/n\}$.  
This then uses $N=n^2$ points and we expect an
error of $O(\log(n)/n)=O(\log(N)/N^{1/2})$ which
is a worse rate than
we can get with the permutation net in $[0,1]^3$.

\subsection{Other generator matrices}

Permutation matrices are not the only generator
matrices that can produce points with the NNLD
property.
For digital nets in base $2$, we know from
Proposition~\ref{lem:upper_diag_point} that if $C^{(1)}=I_m$
then we must have $C^{(j)}\bsone_m=\bsone_m \bmod 2$.
This in turn implies that every row of $C^{(j)}$
must have an odd number of $1$s in it. A numerical search shows there are 221 choice of nonsingular $C^{(2)}$ when $m=4$ and $C^{(1)}=I_4$. Below are some examples:
\begin{align*}
C^{(2)} = \begin{pmatrix}
1 & 0 & 0 & 0\\
1 & 1 & 0 & 1\\
0 & 1 & 1 & 1\\
1 & 1 & 1 & 0\\
\end{pmatrix}
\quad \text{or}\quad
 \begin{pmatrix}
0 & 1 & 0 & 0\\
1 & 0 & 0 & 0\\
1 & 0 & 1 & 1\\
1 & 1 & 1 & 0\\
\end{pmatrix}
\quad \text{or}\quad
 \begin{pmatrix}
0 & 0 & 1 & 0\\
1 & 0 & 0 & 0\\
1 & 1 & 0 & 1\\
0 & 1 & 0 & 0\\
\end{pmatrix}
.
\end{align*}

Nevertheless, it is hard to find an example where non-permutation matrices perform better than permutation matrices with respect to the $t$-value. When $d=3$, one can verify, either by lengthy reasoning or brute-force enumeration, that NNLD digital nets constructed by non-permutation matrices cannot attain a better t-value than those constructed by permutation matrices for $m\le 7$ and $b=2$.

\subsection{Composite values of $b$}

Our constructions using permutation matrices as
generators do not actually require $b$ to be a prime number,
nor does the result of \cite{gaba:1967} on the NNLD property of Hammersley points.  When $b$ is a prime power, using modular arithmetic will not generally give the same nets that the usual algorithms (see \cite{dick:pill:2010}) over finite fields do. Our proofs do not show that nets based on finite fields of non-prime cardinality will lead to NNLD points, although those other nets may have a much better value of $t$.

The integers modulo a composite number $b$ form a commutative ring but they do not form a field, and this complicates the definition of $t$ which relies on matrix rank. Digital net constructions based on such rings are described in \cite{nied92}. The notion of rank in Theorem 4.26 there is that the linear equation $\mathcal{C}^{(\bsm)}\bsz=\bsf$ with $|\bsm|=m-t$ should have exactly $b^t$ solutions $\bsz$ for any vector $\bsf$ in arithmetic over the ring. Then a digital construction provides a point set with the given value of $t$.  Our matrices $C^{(j)}$ are binary with exactly one $1$ in each row.  Then this more general notion of rank becomes the number of distinct rows in $\mathcal{C}^{(\bsm)}$, the same as it would be for a field.
\section{Non-trivial Rank 1 lattices that are NNLD}\label{sec:rankone}

Here we consider special cases of rank-1 lattice rules that are
suboptimal in terms of discrepancy, but produce
NNLD points.  While they can be defined in any dimension $d\ge2$ it is only for dimension $1$ that they are projection regular. Therefore the conclusions from Proposition~\ref{lem:upper_diag_point}
and Corollary~\ref{cor:pro_reg_lattice_NNLD} do not hold for them when $d>1$.

\begin{theorem}\label{thm:non_pro_reg_lattice_NNLD}
For integers $m\ge d$ and $b\ge2$ 
and $0\le i<n=b^m$, let 
$$
\bsx_{i} = 
\Bigl(
\frac{i}{n},
\frac{ib}{n},
\dots,
\frac{ib^{j-1}}n,
\dots,
\frac{ib^{d-1}}{n}
\Bigr) \quad\bmod 1.$$
Then the points $\bsx_0,\dots,\bsx_{n-1}$
are NNLD.
\end{theorem}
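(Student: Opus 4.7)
The plan is to mirror the proof of Theorem~\ref{thm:perm_implies_nnld}, using the theory of associated random variables applied to the base-$b$ digits of a uniformly random index $i$. First I would write $i=\sum_{k=1}^m a_{i,k}b^{k-1}$ with $a_{i,k}\in\ints_b$ and observe that, because $n=b^m$, the summands of $ib^{j-1}/n$ whose exponent of $b$ is non-negative are integers and vanish modulo $1$, leaving
$$
x_{i,j} \;=\; \sum_{k=1}^{m-j+1} a_{i,k}\,b^{k+j-2-m},\qquad j=1,\dots,d.
$$
Every exponent here is negative, so $x_{i,j}$ is a non-decreasing function of each digit $a_{i,k}$ and does not depend on the digits with $k>m-j+1$. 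Drawing $i\sim\dunif\{0,1,\dots,n-1\}$ makes $a_{i,1},\dots,a_{i,m}$ independent and uniform on $\ints_b$.

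Next I would verify the two ingredients giving NNLD. For the marginals, as $i$ varies, $x_{i,j}$ takes each value in $\{k/b^{m-j+1}:0\le k<b^{m-j+1}\}$ exactly $b^{j-1}$ times, so
$$
\Pr(x_{i,j}<z_j) \;=\; \frac{\lceil b^{m-j+1}z_j\rceil}{b^{m-j+1}} \;\ge\; z_j,
$$
i.e.\ each marginal of $\dunif(P_n)$ is stochastically smaller than $\dunif[0,1]$. For joint positive dependence, independent random variables are associated by property (P2) of \cite{esar:pros:walk:1967}, and non-decreasing functions of associated random variables remain associated by (P4); hence $(x_{i,1},\dots,x_{i,d})$ is associated.

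Finally, the indicators $\ind\{x_{i,j}<z_j\}$ are non-increasing in each $a_{i,k}$, so their negatives are non-decreasing functions of the associated digits. Running the same inductive decomposition used in the proof of Theorem~\ref{thm:perm_implies_nnld} (on $\cov(g_1,g_2)\ge0$ with $g_1=\prod_{j<r}\ind\{x_{ij}<z_j\}$ and $g_2=\ind\{x_{ir}<z_r\}$) yields
$$
\wh\vol([\bszero,\bsz)) = \Pr\Bigl(\bigcap_{j=1}^d\{x_{i,j}<z_j\}\Bigr) \;\ge\; \prod_{j=1}^d \Pr(x_{i,j}<z_j) \;\ge\; \prod_{j=1}^d z_j = \vol([\bszero,\bsz)),
$$
i.e.\ $\delta(\bsz)\ge0$ on all of $[0,1]^d$. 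The main non-routine step is the digit identity for $x_{i,j}$; once that monotone representation is in hand, the argument transcribes directly from Theorem~\ref{thm:perm_implies_nnld}, with the lower-corner box $[\bszero,\bsz)$ and non-increasing test functions replacing the upper-corner box $(\bsone-\bsz,\bsone]$ and non-decreasing test functions used there.
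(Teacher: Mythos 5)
Your proposal is correct and follows essentially the same route as the paper: expand $ib^{j-1}/n \bmod 1$ in base-$b$ digits to see each $x_{ij}$ is a non-decreasing function of the independent (hence associated) digits of a uniformly random index, check that each marginal is uniform on $\{0,1/n_j,\dots,1-1/n_j\}$ and therefore stochastically smaller than $\dunif[0,1]$, and conclude NNLD from positive lower orthant dependence. The only difference is one of exposition: the paper invokes the sufficient condition stated in Section~\ref{sec:defs} directly, whereas you re-run the covariance induction with non-increasing indicators to make the lower-orthant inequality explicit.
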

Before proving this theorem we note that these points are quite poor for integration; however, the structure of the points can be useful for showing good integration bounds in suitably weighted spaces, see \cite{DKLP15}.  There are only $b^{m-j+1}$ unique values of $x_{ij}$. Further, when $|j-j'|$ is small the points $(x_{ij},x_{ij'})$ lie within at most $b^{|j-j'|}$ lines in $[0,1)^2$ and have a large discrepancy.

\begin{proof}
We write $i=\sum_{k=1}^ma_i(k)b^{k-1}$ and then
\begin{align*}
nx_{ij} &= b^{j-1}\sum_{k=1}^ma_i(k)b^{k-1}
\ \bmod b^m
= \sum_{k=1}^{m+1-j}a_i(k)b^{j+k-2}.
\end{align*}
For $i\sim\dunif\{0,1,\dots,n-1\}$
the digits $a_i(1),\dots,a_i(m)$
are independent $\dunif(\ints_b)$ random
variables.  Hence they are associated random
variables which makes $nx_{i1},\dots,nx_{id}$
and hence $x_{i1},\dots,x_{id}$ into
associated random variables.
Finally, $x_{ij}$ has the uniform distribution 
on $\{0,1/n_j,2/n_j,\dots,1-1/n_j\}$
where $n_j=n/b^{j-1}$. This distribution is
stochastically smaller than $\dunif[0,1]$
and so $\bsx_i$ are NNLD.
\end{proof}

The values $x_{ij}$ for $0\le i<b^m$ in these lattices take $n_j=b^{m-j+1}$ distinct values $\ell/n_j$ for $0\le \ell<n_j$ with each of those values appearing $n/n_j$ times. As such they constitute a left endpoint integration rule on $n_j$ points and so for nonperiodic smooth integrands we anticipate an error rate of $O(n_j^{-1})$. For this to be better than plain MC we require $n_j\ge\sqrt{n}$ or $j\le m/2$. While a better rate is available for periodic integrands, those cannot be completely monotone unless they are constant.

\section{Discussion and further references}\label{sec:disc}

We find that it is possible to get computable 
bounds on some integrals by using points with a suitable
bias property (non-negative local discrepancy (NNLD)) on
integrands with a suitable monotonicity property (complete monotonicity).  The method of associated random variables
is useful for showing that a given point set is NNLD.

There are several generalizations of multivariate monotonicity
in \cite{mosl:scar:1991}. They include the complete
monotonicity discussed here as well as the more commonly 
considered monotonicity in each of the $d$ inputs one
at a time. The complexity of integrating coordinate-wise monotone functions has been studied by \cite{novak:1992,papa:1993}.
Scrambled $(t,m,d)$-nets have been shown to be
negatively orthant dependent if and only if $t=0$
\cite{wiar:lemi:dong:t2021}.  
Similarly, it was shown in \cite{WnuGne:2020} that randomly shifted and jittered (RSJ) rank-$1$ lattices based on a random
generating vector are also negatively orthant dependent and that, in some sense, one cannot achieve this result by employing less randomness.
Using the
NLOD property of the distribution of these
RQMC points, it follows from \cite{lemi:2018} that for functions which are monotone in each variable
scrambled nets and RSJ rank-1 lattices cannot increase variance
over plain Monte Carlo in  any dimension $d$.  

While complete monotonicity is a very special property,
its applicability can be widened by the method of
control variates.  If $h(\cdot)$ is completely monotone
with known integral $\theta$, we will in some settings be able to find $\lambda_+>0$ for
which $f+\lambda_+ h$ is a completely monotone
function of $\bsx$. Then by Theorem~\ref{thm:basic} we
can compute an upper bound $B_+\ge\mu+\lambda_+\theta$
and conclude that $\mu\le B_+-\lambda_+\theta$.  
Similarly a
lower bound can be found by choosing $\lambda_-$ such
that $\lambda_-h-f$ is a completely monotone function
of $\bsx$, using Theorem~\ref{thm:basic} to get
an upper bound $\lambda_-\theta-\mu\le B_-$
and then concluding that $\mu\ge\lambda_-\theta -B_-$.
Details on how to choose $h$ and find $\lambda_\pm$
are beyond the scope of this article.

The customary way to quantify uncertainty in QMC
is to use RQMC replicates with statistically derived
asymptotic confidence intervals.  For a recent
thorough empirical evaluation of RQMC, see
\cite{lecu:naka:owen:tuff:2023}, who found
the usual confidence intervals based on the central
limit theorem to be even more reliable than 
sophisticated bootstrap methods.
Here we have found
an alternative computable non-asymptotic approach with
100\% coverage, but so far
it does not give very good accuracy for high dimensions.

The error bounds that we produce do not
make any assumptions about regularity of $f$
beyond complete monotonicity. In particular,
smoothness of $f$ does not enter. For $d=1$ with
left and right endpoint rules to bracket $\mu$, 
the width of the uncertainty interval is
$(f(1)-f(0))/n$ regardless of any differentiability
properties of $f$. We leave it to future work to
determine what role if any smoothness plays for $d\ge2$.

\section*{Acknowledgments}
We thank Josef Dick, David Krieg, Frances Kuo, Dirk Nuyens
and Ian Sloan for discussions.
We also thank two anonymous reviewers for helpful
comments.
Much of this work took place at 
the MATRIX Institute's location in Creswick
Australia as part of their
research program on `Computational Mathematics for High-Dimensional Data in Statistical Learning', in February
2023, and the paper was finalized during the Dagstuhl Seminar 23351 `Algorithms and Complexity for Continuous Problems', in Schloss Dagstuhl, Wadern, Germany, in August 2023.
We are grateful to MATRIX and to the Leibniz Center Schloss Dagstuhl. 
The contributions of ABO and ZP were supported 
by the U.S.\ National
Science Foundation under grant DMS-2152780.
Peter Kritzer is supported by the Austrian Science Fund (FWF) Project P34808. For the purpose of open access, the authors have applied a CC BY public copyright licence to any author accepted manuscript version arising from this submission.
\bibliographystyle{plain}
\bibliography{qmc}

\begin{thebibliography}{10}

\bibitem{aist:dick:2015}
Ch. Aistleitner and J.~Dick.
\newblock Functions of bounded variation, signed measures, and a general
  {Koksma-Hlawka} inequality.
\newblock {\em Acta Arithmetica}, 167(2):143--171, 2015.

\bibitem{boyd:vand:2004}
S.~Boyd and L.~Vandeberghe.
\newblock {\em Convex Optimization}.
\newblock Cambridge University Press, Cambridge, 2004.

\bibitem{declerck:1986}
L.~De~Clerck.
\newblock A method for exact calculation of the stardiscrepancy of plane sets
  applied to the sequences of {H}ammersley.
\newblock {\em Monatsh. Math.}, 101:261--278, 1986.

\bibitem{dick:krit:2006}
J.~Dick and P.~Kritzer.
\newblock A best possible upper bound on the star discrepancy of (t, m,
  2)-nets.
\newblock {\em Monte Carlo Methods and Applications}, 12(1):1--17, 2006.

\bibitem{DKLP15}
J.~Dick, P.~Kritzer, G.~Leobacher, and F.~Pillichshammer.
\newblock A reduced fast component-by-component construction of lattice points
  for integration in weighted spaces with fast decreasing weights.
\newblock {\em Journal of Computational and Applied Mathematics}, 276:1--15,
  2015.

\bibitem{dick:krit:pill:2022}
J.~Dick, P.~Kritzer, and F.~Pillichshammer.
\newblock {\em Lattice Rules: Numerical Integration, Approximation, and
  Discrepancy}.
\newblock Springer Nature, 2022.

\bibitem{dick:pill:2010}
J.~Dick and F.~Pillichshammer.
\newblock {\em Digital sequences, discrepancy and quasi-{Monte Carlo}
  integration}.
\newblock Cambridge University Press, Cambridge, 2010.

\bibitem{dobk:epps:mitc:1996}
D.P. Dobkin, D.~Eppstein, and D.P. Mitchell.
\newblock Computing the discrepancy with applications to supersampling
  patterns.
\newblock {\em ACM Trans. Graph.}, 15:354--376, 1996.

\bibitem{doer:gnew:wals:2014}
C.~Doerr, M.~Gnewuch, and M.~Wahlstr{\"o}m.
\newblock Calculation of discrepancy measures and applications.
\newblock In Chen W., Srivastav A., and Travaglini G., editors, {\em A Panorama
  of Discrepancy Theory}, pages 621--678. Springer, 2014.

\bibitem{elstrodt:2018}
J.~Elstrodt.
\newblock {\em Ma{\ss} und Integrationstheorie}.
\newblock Springer Spektrum, Berlin, 2018.
\newblock 8th edition.

\bibitem{esar:pros:walk:1967}
J.~D. Esary, F.~Proschan, and D.~W. Walkup.
\newblock Association of random variables, with applications.
\newblock {\em The Annals of Mathematical Statistics}, 38(5):1466--1474, 1967.

\bibitem{faur:krit:2013}
H.~Faure and P.~Kritzer.
\newblock New star discrepancy bounds for $(t,m,s)$-nets and $(t,s)$-sequences.
\newblock {\em Monatshefte f\"{u}r Mathematik}, 172:55--75, 2013.

\bibitem{gaba:1967}
H.~Gabai.
\newblock On the discrepancy of certain sequences mod 1.
\newblock {\em Illinois Journal of Mathematics}, 11(1):1--12, 1967.

\bibitem{mvtnorm:book}
A.~Genz and F.~Bretz.
\newblock {\em Computation of Multivariate Normal and t Probabilities}.
\newblock Lecture Notes in Statistics. Springer-Verlag, Heidelberg, 2009.

\bibitem{mvtnorm}
A.~Genz, F.~Bretz, T.~Miwa, X.~Mi, F.~Leisch, F.~Scheipl, and T.~Hothorn.
\newblock {\em {mvtnorm}: Multivariate Normal and t Distributions}, 2021.
\newblock R package version 1.1-3.

\bibitem{gia:kna:wah:wer:2012}
P.~Giannopoulos, C.~Knauer, M.~Wahlstr\"ohm, and D.~Werner.
\newblock Hardness of discrepancy computation and $\varepsilon$-net
  verification in high dimension.
\newblock {\em Journal of Complexity}, 28:162--176, 2012.

\bibitem{gnew:sriv:winz:2008}
M.~Gnewuch, A.~Srivastav, and C.~Winzen.
\newblock Finding optimal volume subintervals with $k$ points and computing the
  star discrepancy are {NP}-hard.
\newblock {\em Journal of Complexity}, 24:154--172, 2008.

\bibitem{gnew:wahl:winz:2012}
M.~Gnewuch, M.~Wahlstr\"om, and C.~Winzen.
\newblock A new randomized algorithm to approximate the star discrepancy based
  on {T}hreshold {A}ccepting.
\newblock {\em SIAM J. Numer. Anal.}, 50:781--807, 2012.

\bibitem{hamm:1960}
J.~M. Hammersley.
\newblock {Monte Carlo} methods for solving multivariable problems.
\newblock {\em Annals of the New York Academy of Sciences}, 86(3):844--874,
  1960.

\bibitem{hick:2014}
F.~J. Hickernell.
\newblock {Koksma-Hlawka} inequality.
\newblock {\em Wiley StatsRef: Statistics Reference Online}, 2014.

\bibitem{joe:kuo:2008}
S.~Joe and F.~Y. Kuo.
\newblock Constructing {Sobol'} sequences with better two-dimensional
  projections.
\newblock {\em SIAM Journal on Scientific Computing}, 30(5):2635--2654, 2008.

\bibitem{lar:pil:2003}
G.~Larcher and F.~Pillichshammer.
\newblock Sums of distances to the nearest integer and the discrepancy of
  digital nets.
\newblock {\em Acta Arithmetica}, 106:379--408, 2003.

\bibitem{lecu:lemi:2002}
P.~L'Ecuyer and C.~Lemieux.
\newblock A survey of randomized quasi-{M}onte {C}arlo methods.
\newblock In M.~Dror, P.~L'Ecuyer, and F.~Szidarovszki, editors, {\em Modeling
  Uncertainty: An Examination of Stochastic Theory, Methods, and Applications},
  pages 419--474. Kluwer Academic Publishers, 2002.

\bibitem{lecu:naka:owen:tuff:2023}
P.~L'Ecuyer, M.~K. Nakayama, A.~B. Owen, and B.~Tuffin.
\newblock Confidence intervals for randomized quasi-{Monte Carlo} estimators.
\newblock Technical Report \url{https://inria.hal.science/hal-04088085/}, 2023.

\bibitem{lemi:2018}
C.~Lemieux.
\newblock Negative dependence, scrambled nets, and variance bounds.
\newblock {\em Mathematics of Operations Research}, 43(1):228--251, 2018.

\bibitem{mato:1998}
J.~Matou\v{s}ek.
\newblock {\em Geometric Discrepancy: An Illustrated Guide}.
\newblock Springer-Verlag, Heidelberg, 1998.

\bibitem{mosl:scar:1991}
K.~Mosler and M.~Scarsini.
\newblock Some theory of stochastic dominance.
\newblock {\em Lecture Notes-Monograph Series}, pages 261--284, 1991.

\bibitem{nied92}
H.~Niederreiter.
\newblock {\em Random Number Generation and Quasi-{Monte Carlo} Methods}.
\newblock S.I.A.M., Philadelphia, PA, 1992.

\bibitem{novak:1992}
E.~Novak.
\newblock Quadrature formulas for monotone functions.
\newblock {\em Proc. of the AMS.}, 115:59--68, 1992.

\bibitem{variation}
A.~B. Owen.
\newblock Multidimensional variation for quasi-{Monte Carlo}.
\newblock In J.~Fan and G.~Li, editors, {\em International Conference on
  Statistics in honour of Professor Kai-Tai Fang's 65th birthday}, 2005.

\bibitem{Owen:2023practical}
A.~B. Owen.
\newblock {\em {Practical Quasi-Monte Carlo}}.
\newblock Draft available at
  \url{https://artowen.su.domains/mc/practicalqmc.pdf}, 2023.

\bibitem{wherearethelogs}
A.~B. Owen and Z.~Pan.
\newblock Where are the logs?
\newblock In {\em Advances in Modeling and Simulation: {F}estschrift for Pierre
  L'Ecuyer}, pages 381--400. Springer, Cham, Switzerland, 2022.

\bibitem{papa:1993}
A.~Papageorgiou.
\newblock Integration of monotone functions of several variables.
\newblock {\em Journal of Complexity}, 9(2):252--268, 1993.

\bibitem{shak:1982}
M.~Shaked.
\newblock A general theory of some positive dependence notions.
\newblock {\em Journal of Multivariate Analysis}, 12(2):199--218, 1982.

\bibitem{sloanjoe}
I.~H. Sloan and S.~Joe.
\newblock {\em Lattice Methods for Multiple Integration}.
\newblock Oxford Science Publications, Oxford, 1994.

\bibitem{Thie:2001}
E.~Thi\'{e}mard.
\newblock Optimal volume subintervals with $k$ points and star discrepancy via
  integer programming.
\newblock {\em Math. Methods Oper. Res.}, 54:21--45, 2001.

\bibitem{wiar:lemi:dong:t2021}
J.~Wiart, C.~Lemieux, and G.~Y. Dong.
\newblock On the dependence structure and quality of scrambled (t, m, s)-nets.
\newblock {\em Monte Carlo Methods and Applications}, 27(1):1--26, 2021.

\bibitem{WnuGne:2020}
M.~Wnuk and M.~Gnewuch.
\newblock Note on pairwise negative dependence of random rank-$1$ lattices.
\newblock {\em Operations Research Letters}, 48:410--414, 2020.

\end{thebibliography}

\end{document}